\makeatletter\@addtoreset{equation}{section} \makeatother
\begin{document}

	\title{Backward Stochastic Control System with Entropy Regularization\thanks{
This paper is supported by National Key R\&D Program of China (No.2022YFA1006101), National Natural Science Foundation of China (No.12371445) and the Science and Technology Commission of Shanghai Municipality (No.22ZR1407600).}}
\author{Ziyue Chen
	\thanks{School of Mathematical Sciences, Fudan University, Shanghai 200433, China (\email{chenziyue21@m.fudan.edu.cn}).}
	\and Qi Zhang
	\thanks{Corresponding author. School of Mathematical Sciences, Fudan University, Shanghai 200433, China and Laboratory of Mathematics for Nonlinear Science, Fudan University, Shanghai 200433, China (\email{qzh@fudan.edu.cn}).}
}
		\maketitle
\begin{abstract}
The entropy regularization is inspired by information entropy from machine learning and the ideas of exploration and exploitation in reinforcement learning, which appears in the control problem to design an approximating algorithm for the optimal control. This paper is concerned with the optimal exploratory control for backward stochastic system, generated by the backward stochastic differential equation and with the entropy regularization in its cost functional. We give the theoretical depict of the optimal relaxed control so as to lay the foundation for the application of such a backward stochastic control system to mathematical finance and algorithm implementation. For this, we first establish the stochastic maximum principle by convex variation method. Then we prove sufficient condition for the optimal control and demonstrate the implicit form of optimal control. Finally, the existence and uniqueness of the optimal control for backward linear-quadratic control problem with entropy regularization is proved by decoupling techniques.
\end{abstract}
\begin{keywords} backward stochastic control system, relaxed control, entropy regularization, maximum principle, linear-quadratic problem.
\end{keywords}
\begin{MSCcodes}
93E20, 93C15
\end{MSCcodes}
\section{Introduction}
Different from the deterministic system which has only one path, there is much difference between the forward stochastic system and the backward one. It is well known that stochastic differential equation (SDE) and backward stochastic differential equation (BSDE) are much different from the form of equation to the form of solution. 
In fact, BSDE and forward-backward stochastic differential equation (FBSDE) play a special role in many problems of stochastic analysis and stochastic controls. For example, Peng \cite{pe1991} demonstrates that the solution to FBSDE gives the probabilistic interpretation of nonlinear PDE which is known as nonlinear Feynman-Kac formula, Duffie and Epstein \cite{duffie-1992Econometrica} put forward the stochastic differential utility which is  actually a BSDE with conditional expectation, Zhang and Zhao \cite{zh-zh2007} constructs the stationary solution to parabolic stochastic partial differential equation (SPDE) based on the idea that infinite horizon backward doubly stochastic differential equation can serves as "elliptic" SPDE to give the pathwise steady statue of parabolic SPDE, to name but a few. Without the exception of control system, the forward stochastic control system and backward stochastic control system are also different. The difference not only lies on the state equation, i.e. the state equation of backward stochastic control system is controlled BSDE rather than SDE, but also on the application. It is well known that the controlled BSDE is widely used in mathematical finance, for example, as the dynamic equation for the value of portfolio to replicate a contingent claim. Also, as shown in Karnam, Ma and Zhang \cite{ka-ma-zh}, many  time-inconsistent optimization problems can be transformed into a stochastic controlled problem
with multidimensional BSDE dynamics by the so-called dynamic utility approach. Recently, the controlled BSDE is also applied to the numerical calculation of partial differential equation based on the theory of nonlinear Feynman-Kac formula. In an extended work of the deep BSDE numerical scheme by Takahashi, Tsuchida and Yamada \cite{ta-ts-ya}, the controlled BSDE is used to make the scheme more efficient and stable.

The state equation of backward stochastic control system is a controlled BSDE as below
\begin{equation}\label{equation:bsde1}
\left\{\begin{aligned}
-d y^{\pi}(t) & =f(t, y^{\pi}(t), z^{\pi}(t), \pi_t) d t-z^{\pi}(t) d W(t), \\
y(T) & =\xi,
\end{aligned}\right.
\end{equation}
where $\pi$ is the control variable. The studies on the backward stochastic control system emerged soon after the solvability of nonlinear BSDE.
Here we recall some early results. Peng \cite{peng-1993} first derived the local stochastic maximum principle in 1993. El Karoui, Peng and Quenez \cite{el-1997} demonstrated the application of controlled BSDE in finance. Dokuchaev and Zhou \cite{dokuchaev-1999} applied the backward stochastic control system to pricing European contingent claims and derived the global stochastic maximum principle in nonconvex control domain. For the linear-quadratic (LQ) case, it was studied in 2001 by Lim and Zhou \cite{lim-2001} with the help of decoupling techniques.

For $\sigma>0$, the first motivation to write this paper is to study the relaxed control of a type of backward stochastic control system with an entropy-regularized cost functional as below
\begin{equation}\label{eq:func bsde}
    J^{\sigma}(\pi)=\mathbb{E}\left[\int_0^T (l\left(t, y^\pi(t), z^\pi(t), \pi_t\right) + \frac{\sigma^2}{2} Ent(\pi_t \mid e^{-U}))d t+\phi\left(y^\pi(0)\right)\right].
\end{equation}
The relaxed control means that the value of the control could depend on a distribution of value space, which actually enhances the possibility to get an optimal control, especially in a case that the classical control doesn't exist. This concept was put forward by Becker and Mandrekar \cite{be-ma1969} for deterministic control system in 1969 and extended to stochastic control system by Fleming \cite{fl1978} in 1978, and later El Karoui, H\.{u}\.{u}~Nguyen and Jeanblanc-Picqu\'{e} \cite{el-1987} further study the relaxed control for stochastic control system with degenerate diffusion. 
It is worth noting that the relaxed control has a significant applications in machine learning algorithms, especially with the entropy regulation in the cost functional since the use of relaxed controls along with entropy regulation improves algorithm stability and efficiency. Actually, the entropy regulation has been widely used in numerical calculus for a long time. For example, the entropy regularization was ever applied to iterative numerical scheme for solving PDEs, and one can refer to Jordan, Kinderlehrer and Otto \cite{jordan-1998}, Gomes and Valdinoci \cite{gomes-2007} for details.
To apply this method to reinforcement learning, Wang, Zariphopoulou and Zhou \cite{wang-2020} studied the relaxed control with entropy-regularized cost functional to devise an exploratory formulation for the feature dynamics which captures learning under exploration based on the dynamic programming method. Moreover, in LQ case, \cite{wang-2020} proved that the optimal control is Gaussian. In the meantime, Wang and Zhou \cite{wangZhou-2020} showed that it has potential application in continuous mean-variance optimal portfolio problem. From the point of view of stability, Reisinger and Zhang \cite{reisinger-2021} demonstrated the regularised relaxed control formulation ensures that the optimal controls are stable with respect to model perturbations. A recent version of {\v S}i{\v s}ka and Szpruch \cite{siska-2020} added the priori reference measure into the entropy regularization and studied this general control system by the maximum principle method. Besides, there are more results on this topic for forward stochastic control system emerging recently, such as Gao, Xu and Zhou \cite{gao-2022},
Firoozi and Jaimungal \cite{firoozi-2022}, Jia and Zhou \cite{Jia-2023}, Dai, Dong, Jia and Zhou \cite{da-do-ji-zh}, etc. 

Unlike the control systems or research methods in \cite{wang-2020} and \cite{siska-2020}, we study the backward stochastic control systems \eqref{equation:bsde1} and \eqref{eq:func bsde} based on maximum principle method, which allows us to consider the control system with random coefficients. We get the necessary and sufficient condition for the optimal control which establishes a theoretical foundation for future applications on mathematical finance and machine learning. With the depict of the optimal control derived from the stochastic Hamiltonian system, it provides us a chance to give a specific implicit or explicit form of the relaxed optimal control. Actually, in the LQ case, we borrow the idea of decoupling techniques for backward stochastic control system in Lim and Zhou \cite{lim-2001} to prove the optimal control exactly exists and obtain its explicit from. As expected, the optimal relaxed control for backward stochastic LQ control systems with entropy regularization appears to be Gaussian. According to the theoretical depict of the optimal control for the backward stochastic control systems, we can also design an algorithm to approximate the optimal control by the method of successive approximation. Due to limited space, we will study it in another paper.

This paper is organized as follows. We introduce the necessary notation and state the backward stochastic control problem with entropy regularization in Section 2. Then we prove the extended maximum principle for our concerned control problem, and get the necessary condition of the optimal relaxed control in Section 3. The sufficient condition of the optimal relaxed control is given in Section 4, and the implicit form of optimal control is also discussed. In Section 5, we prove the existence and uniqueness of the optimal control for stochastic LQ control problem with entropy regularization and give the explicit form of optimal control in LQ case.

\section{Notation and Control Problem}
\label{sec:main}

Given a separable metric space $E$, let $\mathcal{M}(E)$ denote the set of all measures on $E$, $\mathcal{P}_q(E)$ denote the set of probability measures defined on $E$ with finite $q$-th moment for $q\in\mathbb{N}$ and $\mathcal{P}(E)=\mathcal{P}_0(E)$ denote the set of all probability measures on $E$. In this paper the entropy of the measure $\pi \in \mathcal{P}(E)$ is defined as below
\begin{equation*}\label{eq: entropy}
    R(\pi) = \left\{\begin{array}{l}
        - \int_E \frac{d\pi}{du}\ln\frac{d\pi}{du} du, \quad \text{if}\ \pi \ll \lambda(du), \\
          -\infty, \quad \text{otherwise,}
    \end{array}\right.
\end{equation*}
where $\lambda$ is the Lebesgue measure on a separable metric space $E$. In order to make above entropy well defined, we assume that all probability-measure-valued control in this paper are absolutely continuous with respect to $\lambda$. Hence by Radon-Nikodym theorem, there exists a measurable function $g:U \rightarrow \mathbb{R}^+$ such that $\mu(du) = g(u)du$ a.e. Throughout the paper we will abuse the notation of probability measure which are absolutely continuous with respect to Lebesgue measure and do not distinguish a probability measure from its density which exists due to Radon-Nikodym theorem.

We begin with a finite time horizon $[0, T]$ for $T>0$ and a complete filtered probability space $\left(\Omega, \mathscr{F}, \mathbb{P}\right)$, on which a standard $\mathbb{R}^{m}$-valued Brownian motion $W$ is defined. Moreover, $\mathbb{F}=\left(\mathscr{F}_t\right)_{0 \leq t \leq T}$ is the natural filtration generated by $W$ and $\mathscr{F}_T=\mathscr{F}$. 
Next we introduce some useful spaces. For $t\in[0,T]$ and a Hilbert space $S$ with norm $\|\cdot\|_S$
and Borel $\sigma$-field $\mathscr{S}$, we define\\
$\bullet$~~$S_{\mathscr{F}}^2(0, T ; S)$: the space of all $\mathbb{F}$-adapted processes $x:\Omega\times[0,T]\rightarrow S$ satisfying $t\rightarrow x(t)$ is a.s. continuous and $\mathbb{E} \left[\sup_{0\leq t\leq T}\|x(t)\|_S^2 \right]<+\infty$;\\
$\bullet$~~$L_{\mathscr{F}}^2(0, T ; S)$: the space of all $\mathbb{F}$-adapted processes $x:\Omega\times[0,T]\rightarrow S$ satisfying $\mathbb{E} \left[\int_0^T\|x(t)\|_S^2dt \right]<+\infty$;\\
$\bullet$~~$L_{\mathscr{F}_t}^2(\Omega ; S)$: the space of all $\mathscr{F}_t$-measurable random variables $x:\Omega\rightarrow S$ satisfying $\mathbb{E}\left[\|x(t)\|_S^2\right]<+\infty$;\\
$\bullet$~~$L^{\infty}(0,T;S)$: the space of all measurable maps $x: [0, T] \rightarrow S$ satisfying $\|x\|_{L^{\infty}}=\operatorname{ess} \sup _{t \in[0, T]}\Vert x(t) \Vert_S<\infty$;\\
$\bullet$~~$L^{\infty}(S)$: the space of all measurable maps $x: S \rightarrow \mathbb{R}$ satisfying $\|x\|_{L^{\infty}(S)}= \operatorname{ess} \sup_{a \in S} |x(a)|<\infty$;\\
$\bullet$~~$C^{\infty}$: the space of all infinitely differentiable functions.

To avoid heavy notations, we omit the transpose symbols in this paper unless necessary.

For the backward stochastic control systems \eqref{equation:bsde1} and \eqref{eq:func bsde}, the set of admissible controls $\mathcal{A}$ is defined as follows.
\begin{definition}\label{def:admissible control1}
    For $E=[0, T] \times \mathbb{R}^p$, we define a subset of $\mathcal{M}(E)$:
    {\small$$
    \begin{aligned}
    & \mathcal{M}_2:=\Big\{\pi \in \mathcal{M}(E): \text { for a.a. } t \in[0, T],  \text { there exists }\ \pi_t \in \mathcal{P}(\mathbb{R}^p) \text{ such that}\\
    &\ \quad \quad \quad \quad \quad \quad \quad \quad \quad \pi(d a, d t)=\pi_t(a) d a d t,\ \int_0^T \int|a|^2 \pi_t(a) d a d t <\infty \Big\}.
    \end{aligned}
    $$}
Here and in the rest of this paper, the integration without explicit domain is over $\mathbb{R}^p$ unless indicated explicitly. Then the set of admissible controls
{\small$$
\begin{aligned}
& \mathcal{A}:=\Big\{\pi: \Omega \rightarrow \mathcal{M}_2: \mathbb{E}\left[ \int_0^T Ent(\pi_t\mid e^{-U}) d a d t\right]<\infty,\ \mathbb{E}\left[\int_0^T \int|a|^2 \pi_t(a) d a d t\right]<\infty\\
&\ \quad \quad \quad \quad \quad \quad \quad \quad \quad \text { and } \pi_t \text { is } \mathscr{F}_t \text {-measurable for any}\ t \in[0, T]\Big\}.
\end{aligned}
$$}
where $U:\mathbb{R}^p\to\mathbb{R}$ is a measurable function such that $e^{-U}$ is a density function. Here $e^{-U}$ is regarded as the priori reference measure and $Ent(\cdot \mid \cdot)$ is the relative entropy characterized by
$$
Ent(m \mid m^{\prime}) := \int \left(\ln m(a) - \ln m^{\prime}(a)\right) m(a) da,
$$
for $m,m^{\prime} \in \mathcal{P}(\mathbb{R}^p)$ which are absolutely continuous with respect to the Lebesgue measure (otherwise $Ent(m \mid m^{\prime}) = \infty$).
\end{definition}

\begin{rmk}
By Lemma A.1 in Kerimkulov, {\v S}i{\v s}ka, Szpruch and Zhang \cite{kerimkulov-2024}, the admissible control set $\mathcal{A}$ is convex due to the fact that $\pi \mapsto Ent(\pi \mid e^{-U})$ is convex.
\end{rmk}

For the given measurable functions $f: \Omega\times[0,T] \times \mathbb{R}^n \times \mathbb{R}^{n \times m} \times \mathcal{P}(\mathbb{R}^p) \rightarrow \mathbb{R}^n$, $l: \Omega\times[0,T] \times \mathbb{R}^n \times \mathbb{R}^{n \times m}  \times \mathcal{P}(\mathbb{R}^p) \rightarrow \mathbb{R}$, $\xi: \Omega\rightarrow\mathbb{R}^n$, $\phi:\mathbb{R}^n \rightarrow \mathbb{R}^n$ and the admissible control $\pi \in \mathcal{A}$, we consider the backward stochastic system \eqref{equation:bsde1} and \eqref{eq:func bsde}. Then the control problem is

(\textbf{P1}): to find an optimal $\mu \in \mathcal{A}$ such that
$$
J^{\sigma}(\mu) = \inf_{\pi \in \mathcal{A} } J^{\sigma}(\pi).
$$

Since the concerned system involves the measure-valued control, we need define flat derivative on a convex subset $\mathcal{C} \subseteq \mathcal{P}\left(\mathbb{R}^p\right)$. The following definition refers to \cite{kerimkulov-2024} based on the ideas from early literatures (e.g. Lions \cite{li}, Carmona and Delarue \cite{ca-de}, Buckdahn, Li, Peng and Rainer \cite{bu-li-pe-ra}, etc.).

\begin{definition}\label{def: flat derivative}
    A functional $F: \mathcal{C} \rightarrow \mathbb{R}^d$ is said to admit a linear derivative if there is a continuous map $\frac{\delta F}{\delta m}: \mathcal{C} \times \mathbb{R}^p \rightarrow \mathbb{R}^d$ such that for all $m, m^{\prime} \in \mathcal{C}$, it holds that $\int \left|\frac{\delta F}{\delta m}( m)(a)\right| m^{\prime}(a) d(a) < \infty$, and
    \begin{equation}\label{eq: flat derivative01 }
        F\left(m^{\prime}\right)-F(m)=\int_0^1 \int \frac{\delta F}{\delta m}\left(m+\lambda\left(m^{\prime}-m\right)\right)(a)  \cdot \left(m^{\prime}(a)-m(a)\right)da d \lambda .
    \end{equation}
\end{definition}

In the above definition, $\frac{\delta F}{\delta m}$ is only defined up to a constant according to Remark 5.46 in Carmona and Delarue \cite{carmona-2018}. The functional $\frac{\delta F}{\delta m}$ is then called the linear (functional) derivative of $F$ on $\mathcal{C}$. Note that if $\frac{\delta F}{\delta m}$ exists, 
for any $\nu, \mu \in \mathcal{C}$,
\begin{equation}\label{eq: flat derivative02 }
    \lim _{\varepsilon \rightarrow 0+} \frac{F(\nu+\varepsilon(\mu-\nu))-F(\nu)}{\varepsilon}=\int \frac{\delta F}{\delta m}(\nu)(a)  (\mu(a)-\nu(a))da .
\end{equation}
Obviously, (\ref{eq: flat derivative01 }) implies (\ref{eq: flat derivative02 }). To see the implication in the other direction, take $\nu^\lambda:=\nu+\lambda(\mu-\nu)$ and $\mu^\lambda:=\mu-\nu+\nu^\lambda$ and notice that (\ref{eq: flat derivative02 }) ensures that for all $\lambda \in[0,1]$,
$$
\begin{gathered}
\lim _{\varepsilon \rightarrow 0+} \frac{F\left(\nu^\lambda+\varepsilon(\mu-\nu)\right)-F\left(\nu^\lambda\right)}{\varepsilon}=\lim _{\varepsilon \rightarrow 0+} \frac{F\left(\nu^\lambda+\varepsilon\left(\mu^\lambda-\nu^\lambda\right)\right)-F\left(\nu^\lambda\right)}{\varepsilon} \\
=\int \frac{\delta F}{\delta m}\left(\nu^\lambda\right)(a)  \left(\mu^\lambda-\nu^\lambda\right)da=\int \frac{\delta F}{\delta m}  \left(\nu^\lambda\right)(a) (\mu(a)-\nu(a))da .
\end{gathered}
$$
By the fundamental theorem of calculus, we can derive that
\begin{equation}\label{eq:A.2}
  F(\mu)-F(\nu)=\int_0^1 \lim _{\varepsilon \rightarrow 0+} \frac{F\left(\nu^{\lambda+\varepsilon}\right)-F\left(\nu^\lambda\right)}{\varepsilon} d \lambda=\int_0^1 \int \frac{\delta F}{\delta m} \left(\nu^\lambda\right)(a) (\mu(a)-\nu(a))da d \lambda.
\end{equation}

The linear derivative $\frac{\delta F}{\delta \nu}$ is here also defined up to the additive constant as any constant can be added to $\int \frac{\delta F}{\delta \nu}(\nu, t)(a) \nu_t(d a) $ without affecting the definition formula. 
Note that if $\frac{\delta F}{\delta \nu}$ exists, 
then similarly we have an equivalent form of (\ref{eq:A.2}) as below
\begin{equation*}\label{eq:A.3}
    \forall \nu, \nu^{\prime} \in \mathcal{A}, \lim _{\epsilon \rightarrow 0^{+}} \frac{F\left(\nu+\epsilon\left(\nu^{\prime}-\nu\right)\right)-F(\nu)}{\epsilon}= \int \frac{\delta F}{\delta \nu}(\nu, t)(a)\left(\nu_t^{\prime}(a)-\nu_t(a)\right)d a.
\end{equation*}

Then we state the assumptions in our concerned control problem. 
\begin{ass}\label{H: bsde}
(i) $\xi \in L^2_{\mathscr{F}_T}(\Omega; \mathbb{R}^n)$. Denote by $\mathscr{P}$ the predictable sub-$\sigma$ algebra of $\mathscr{F} \otimes \mathcal{B}([0,T])$, and $f$ is $\mathscr{P} \otimes \mathcal{B}(\mathbb{R}^n) \otimes \mathcal{B}(\mathbb{R}^{n \times m})  \otimes \mathcal{B}(\mathcal{P}(\mathbb{R}^p))$-measurable and there exists a $m \in \mathcal{P}(\mathbb{R}^p)$ such that $f(\cdot, 0,0, m) \in L^2_{\mathscr{F}}(0,T;\mathbb{R}^n)$. Also for $(\omega,t)\in\Omega\times[0,T]$, $f(t,y,z,m)$ is continuously differentiable with respect to $(y,z)\in\mathbb{R}^n\times\mathbb{R}^{n \times m}$ and continuously twice differentiable with respect to $m\in\mathcal{P}_2(\mathbb{R}^p)$ (in the sense of flat derivative), and for any $(\omega, t) \in \Omega \times [0, T]$, $(y,z)$, $(y^{\prime},z^{\prime})\in\mathbb{R}^n\times\mathbb{R}^{n \times m}$, $m\in\mathcal{P}_2(\mathbb{R}^p)$, there exists a constant $K > 0$ such that $\left| \nabla_y f \right| + \left| \nabla_z f \right| +\vert \frac{\delta^2 f}{\delta m^2}\vert\leq K$ uniformly and
    $$
    \begin{aligned}
            \left| \frac{\delta f(t,y,z,m)}{\delta m} - \frac{\delta f(t,y^{\prime},z^{\prime},m)}{\delta m}\right| & \leq K(\left| y - y^{\prime} \right| + \left| z - z^{\prime} \right| ) ;\\
            \left| \frac{\delta f(t,y,z,m)}{\delta m} \right| & \leq K(1 + \left|a\right|).
    \end{aligned}
    $$
    (ii) $\phi$ is $\mathcal{B}(\mathbb{R}^n)$-measurable and $l$ is $\mathscr{P} \otimes \mathcal{B}(\mathbb{R}^n) \otimes \mathcal{B}(\mathbb{R}^{n \times m}) \otimes \mathcal{B}(\mathcal{P}(\mathbb{R}^p))$-measurable. For $(\omega,t)\in\Omega\times[0,T]$, $\phi(y)$ is continuously differentiable with respect to $y\in\mathbb{R}^n$, and $l(t,y,z,m)$ is continuously differentiable with respect to $(y,z)\in\mathbb{R}^n\times\mathbb{R}^{n \times m} $ and continuously twice differentiable with respect to $m\in\mathcal{P}_2(\mathbb{R}^p)$, and for any $(\omega, t) \in \Omega \times [0, T]$, $(y,z)$, $(y^{\prime},z^{\prime})\in\mathbb{R}^n\times\mathbb{R}^{n \times m}$, $m\in\mathcal{P}_2(\mathbb{R}^p)$ and $K$ in (i),
            $$
        \begin{aligned}
        & |\phi| \leq K\left(1+|y|^2\right), \\
        & |l| \leq K\left(1+|y|^2+|z|^2\right),\\
        & \left|\nabla_y \phi\right| \leq K(1+|y|),\\
        &\left|\nabla_y l\right|+\left|\nabla_z l\right|\leq K(1+|y|+|z|), \\
        & \left| \frac{\delta l(t,y,z,m)}{\delta m} - \frac{\delta l(t,y^{\prime},z^{\prime},m)}{\delta m}\right| \leq K(\left| y - y^{\prime} \right| + \left| z - z^{\prime} \right| ),\\
        &\left| \frac{\delta l(t,y,z,m)}{\delta m} \right|  \leq K(1 + \left|a\right|^2);\\
        & |\frac{\delta^{2} l}{\delta m^{2}} |\leq K.\\
        \end{aligned}
        $$
\end{ass}

From Assumption \ref{H: bsde} (i), we know that $f$ is a uniformly Lipschitz generator. Hence for any $\pi \in \mathcal{A}$, by Pardoux and Peng \cite{par-pen1} the state equation (\ref{equation:bsde1}) has a unique solution $(y^\pi, z^\pi) \in S_{\mathscr{F}}^2\left(0, T ; \mathbb{R}^n\right) \times L_{\mathscr{F}}^2\left(0, T ; \mathbb{R}^{n\times m}\right)$.

From Assumption \ref{H: bsde} (i) (ii), we know that the optimization of control problem is well-posed since
$$
\mathbb{E}\left[\left|\phi\left(y^\pi_0\right)\right|\right]<\infty\ \ \ {\rm and}\ \ \ \mathbb{E}\left[ \int_0^T\left|l\left(t, y^\pi_t, z^\pi_t, \pi_t \right)\right| d t\right]<\infty,
$$
together with the fact that $\mathbb{E}\left[ \int_0^T \int Ent(\pi_t \mid e^{-U(a)}) da \right]< \infty$.

\section{Maximum Principle}
    In this section, we denote by $(Y^\pi, Z^\pi)$ the solution to BSDE (\ref{equation:bsde1}) driven by $\pi \in \mathcal{A}$. Since the admissible control set $\mathcal{A}$ is convex, we will work with an additional control $\pi \in \mathcal{A}$ and define $\mu^{\epsilon} := \mu + \epsilon (\pi - \mu)$. Denote by $(Y^{\epsilon}, Z^{\epsilon})$ the solution to BSDE (\ref{equation:bsde1}) driven by $\mu^{\epsilon} \in \mathcal{A}$. Firstly, we have some prior estimates.
    \begin{lemma}\label{lem:big order estimate}
     Under Assumption \ref{H: bsde}, then
     \begin{equation*}\label{eq:higher order estimate}
         \mathbb{E}\left[\sup_{t \in [0,T]} |Y^{\epsilon}_t - Y^\mu_t|^2\right] + \mathbb{E}\left[\int_{0}^{T} |Z^{\epsilon}_t - Z^\mu_t|^2dt\right] = O(\epsilon^2).
     \end{equation*}
    \end{lemma}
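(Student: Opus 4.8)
The plan is to reduce the estimate to the classical $L^2$ a priori bound for BSDEs with Lipschitz generator, by writing the difference process as the solution of a BSDE whose driver splits into a uniformly Lipschitz part and an explicit $O(\epsilon)$ source term arising from the perturbation of the measure argument.

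First I would set $(\Delta Y_t, \Delta Z_t) := (Y^\epsilon_t - Y^\mu_t, Z^\epsilon_t - Z^\mu_t)$. Since $\mu$ and $\mu^\epsilon$ both lie in $\mathcal{A}$ (by convexity of $\mathcal{A}$), each of $(Y^\mu, Z^\mu)$ and $(Y^\epsilon, Z^\epsilon)$ is the unique solution of \eqref{equation:bsde1} with the corresponding control, and subtracting the two equations shows that $(\Delta Y, \Delta Z)$ solves
\begin{equation*}
-d\Delta Y_t = \bigl(g(t, \Delta Y_t, \Delta Z_t) + h_t\bigr)\,dt - \Delta Z_t\,dW_t,\qquad \Delta Y_T = 0,
\end{equation*}
where
\begin{equation*}
\begin{aligned}
g(t,y,z) &:= f(t, Y^\mu_t + y, Z^\mu_t + z, \mu^\epsilon_t) - f(t, Y^\mu_t, Z^\mu_t, \mu^\epsilon_t),\\
h_t &:= f(t, Y^\mu_t, Z^\mu_t, \mu^\epsilon_t) - f(t, Y^\mu_t, Z^\mu_t, \mu_t).
\end{aligned}
\end{equation*}
By Assumption \ref{H: bsde}(i), $|\nabla_y f| + |\nabla_z f| \leq K$, so $g(t,\cdot,\cdot)$ is Lipschitz with constant $K$ uniformly in $(\omega,t)$ and $g(t,0,0) = 0$; thus the driver above is a uniformly Lipschitz generator perturbed by the source $h$.

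Next I would establish $\mathbb{E}\int_0^T |h_t|^2\,dt = O(\epsilon^2)$, which is the heart of the argument. Using the linear derivative of $f$ in its measure argument (Definition \ref{def: flat derivative}) and the identity $\mu^\epsilon_t - \mu_t = \epsilon(\pi_t - \mu_t)$,
\begin{equation*}
h_t = \epsilon \int_0^1\!\!\int \frac{\delta f}{\delta m}\bigl(t, Y^\mu_t, Z^\mu_t,\, (1-\lambda\epsilon)\mu_t + \lambda\epsilon\pi_t\bigr)(a)\cdot\bigl(\pi_t(a) - \mu_t(a)\bigr)\,da\,d\lambda,
\end{equation*}
where each intermediate measure $(1-\lambda\epsilon)\mu_t + \lambda\epsilon\pi_t$ lies in $\mathcal{P}_2(\mathbb{R}^p)$. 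Applying the growth bound $|\frac{\delta f}{\delta m}(t,y,z,m)(a)| \leq K(1+|a|)$ gives $|h_t| \leq \epsilon K \int(1+|a|)\bigl(\pi_t(a) + \mu_t(a)\bigr)\,da$, and then Cauchy--Schwarz together with $\int \pi_t = \int \mu_t = 1$ yields $|h_t|^2 \leq C\epsilon^2 \int(1+|a|^2)\bigl(\pi_t(a)+\mu_t(a)\bigr)\,da$. Integrating in $(\omega,t)$ and invoking the second-moment conditions in the definition of $\mathcal{A}$ for $\pi$ and $\mu$ gives $\mathbb{E}\int_0^T |h_t|^2\,dt \leq C\epsilon^2$ with $C$ independent of $\epsilon \in (0,1]$.

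Finally, I would apply the standard a priori estimate for Lipschitz BSDEs --- obtained via It\^o's formula on $|\Delta Y|^2$, Young's inequality to absorb the Lipschitz terms, Gronwall, and the Burkholder--Davis--Gundy inequality to pass to $\sup_t$ --- which for zero terminal data gives
\begin{equation*}
\mathbb{E}\Bigl[\sup_{t\in[0,T]}|\Delta Y_t|^2\Bigr] + \mathbb{E}\Bigl[\int_0^T |\Delta Z_t|^2\,dt\Bigr] \leq C\,\mathbb{E}\Bigl[\int_0^T |h_t|^2\,dt\Bigr] = O(\epsilon^2),
\end{equation*}
which is the claim. I expect the only real work to be in the third step: one must check that perturbing solely the measure argument of $f$ still produces a genuinely first-order ($O(\epsilon)$) contribution in $L^2$, and this hinges precisely on combining the $|a|$-linear growth bound on $\frac{\delta f}{\delta m}$ with the uniform-in-$\epsilon$ integrability of the second moments guaranteed by admissibility; the remaining steps are routine BSDE estimation.
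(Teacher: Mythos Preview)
Your proposal is correct and follows essentially the same route as the paper: both reduce to the classical $L^2$ stability estimate for Lipschitz BSDEs and control the source term via the flat-derivative representation combined with the linear growth bound $|\frac{\delta f}{\delta m}|\le K(1+|a|)$ and the second-moment admissibility conditions. The only cosmetic difference is that the paper evaluates the source term at $(Y^\epsilon,Z^\epsilon)$ while you evaluate it at $(Y^\mu,Z^\mu)$, which corresponds to the two equivalent forms of the standard BSDE stability inequality.
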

    \begin{proof}
     By Assumption \ref{H: bsde}, $\frac{\delta f}{\delta m}$  is linear growth on $a$. Hence 
     a classical priori estimates of BSDE leads to
        \begin{align*}
             &\mathbb{E} \left[\sup_{t \in [0,T]} |Y^{\epsilon}_t - Y^\mu_t|^2 \right] + \mathbb{E} \left[\int_{0}^{T} |Z^{\epsilon}_t - Z^\mu_t|^2 dt \right] \\
            & \leq C \mathbb{E}\left[\int_0^T \left|f(t,Y^{\epsilon}_t, Z^{\epsilon}_t, \mu^{\epsilon}_t )- f(t,Y^{\epsilon}_t, Z^{\epsilon}_t,  \mu_t )\right|^2dt\right] \\
            & = C  \mathbb{E}\left[\int_0^T \left| \int_0^1 \int \frac{\delta f}{\delta m}(t,Y^{\epsilon}_t, Z^{\epsilon}_t, (1-\lambda)\mu_t + \lambda \mu^{\epsilon}_t)(a)\epsilon(\pi_t(a) - \mu_t(a))dad\lambda \right|^2  dt\right]\\
            & \leq C  \mathbb{E}\left[\int_0^T \left| \epsilon K\int_0^1 \int (1+|a|)(\pi_t(a) + \mu_t(a))dad\lambda \right|^2 dt\right]\\
           & \leq C\epsilon^2 K^2 \mathbb{E} \left[\int_0^T \int_0^1 \int (1+|a|)^2(\pi_t(a) + \mu_t(a))dad\lambda dt \right]\\
            & \leq  C_{K,T} \epsilon^2= O(\epsilon^2).
        \end{align*}
The first equality is due the Definition \ref{def: flat derivative}. 
       Here and in the rest of this paper, $C$ is a generic constant whose value may change line by line, and the subscript of $C$, if it is indicated,  is the given parameters $C$ depends on.
    \end{proof}

    For $\psi = f,l$; $x = y,z$, set
  \begin{equation}\label{eq: difference eq}
        \left\{\begin{aligned}
            \nabla_x \psi(t) & = \nabla_x \psi(t, Y^\mu_t, Z^\mu_t, \pi_t ),\\
            \nabla_x\tilde{\psi}^{\epsilon}(t) & = \int_0^1 \nabla_x \psi(t, Y^\mu_t + \lambda (Y^{\epsilon}_t - Y^\mu_t), Z^\mu_t + \lambda (Z^{\epsilon}_t - Z^\mu_t), \pi_t)d\lambda.
        \end{aligned}\right.
        \nonumber
    \end{equation}
By the uniform boundeness of  $\left| \nabla_x \tilde{f}^{\epsilon} \right|$ and the linear growth of $\left| \frac{\delta f}{\delta m}\right|$, BSDE
   {\small \begin{equation}
        \left\{\begin{aligned}
    -d (Y^{\epsilon}_t - Y^\mu_t) & =-\Big( \nabla_y\tilde{f}^{\epsilon}(t)(Y^{\epsilon}_t - Y^\mu_t) + \nabla_z\tilde{f}^{\epsilon}(t)
            (Z^{\epsilon}_t - Z^\mu_t) \\
            &\quad\quad +\epsilon \int_0^1 \int \frac{\delta f}{\delta m}(t,Y^{\epsilon}_t, Z^{\epsilon}_t, (1-\lambda)\mu_t + \lambda \mu^{\epsilon}_t)(a)(\pi_t(a) - \mu_t(a))da d\lambda \Big)dt\\
            &\quad + (Z^{\epsilon}_t - Z^\mu_t)dW_t, \\
    Y^{\epsilon}_T - Y^\mu_T =0,
    \end{aligned}\right.
    \end{equation}}
has a unique solution.
    Next, we introduce the variation equation
    \begin{equation}\label{eq:variation eq}
        \left\{\begin{aligned}
            d V_t & = -\Big(\nabla_y f(t) V_t + \nabla_z f(t) Z^{V}_t \Big)dt + Z^{V}_t dW_t \\
            & \quad +  \int \frac{\delta f}{\delta m}(t,Y^\mu_t, Z^\mu_t, \mu_t)(a)(\pi_t(a) - \mu_t(a))da  dt ,\\
            V_T & = 0.
        \end{aligned}\right.
    \end{equation}
Similarly, by the boundeness of $\left| \nabla_x f \right| $ and  the linear growth of $\left| \frac{\delta f}{\delta m}\right|$, the variation equation (\ref{eq:variation eq}) has a unique solution. Moreover, we can get the following estimate
    \begin{equation}\label{eq:variation eq's estimate}
        \begin{aligned}
                &\mathbb{E}\left[\sup_{t \in [0,T]} |V_t|^2 \right] + \mathbb{E}\left[\int_{0}^{T} |Z^{V}_t|^2 dt\right] \notag\\
                &\leq C \mathbb{E}\left[\int_0^T  \left|\int \frac{\delta f}{\delta m}(t,Y^\mu_t, Z^\mu_t, \mu_t)(a)(\pi_t(a) - \mu_t(a))da \right|^2dt\right]\notag\\
                & \leq C K^2 \mathbb{E} \left[\int_0^T \int_0^1 \int (1+|a|)^2(\pi_t(a) + \mu_t(a))dad\lambda dt \right]\notag\\
                &\leq 2C_{K,T} .
    \end{aligned}
    \end{equation}

    Set $V^{\epsilon}_t := Y^{\epsilon}_t - Y^\mu_t - \epsilon V_t$ and $Z^{V^{\epsilon}}_t := Z^{\epsilon}_t - Z^\mu_t - \epsilon Z^{V}_t$. 
    Then we have more estimates.
    \begin{lemma}\label{lem: small order estimate }
        Under Assumption \ref{H: bsde},
        \begin{equation*}
                    \begin{aligned}
                        & \mathbb{E}\left[\sup_{t \in [0,T]} |V^{\epsilon}_t|^2 \right] + \mathbb{E} \left[\int_{0}^{T} (|Z^{V^{\epsilon}}_t|^2 ) dt \right] = o(\epsilon^2).
                    \end{aligned}
        \end{equation*}
    \end{lemma}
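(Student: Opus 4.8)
The plan is to write down the BSDE satisfied by $(V^\epsilon_t, Z^{V^\epsilon}_t)$ by subtracting the variation equation \eqref{eq:variation eq} (scaled by $\epsilon$) from the BSDE satisfied by $(Y^\epsilon_t - Y^\mu_t, Z^\epsilon_t - Z^\mu_t)$, and then apply the standard a priori $L^2$-estimate for BSDEs to the difference. The generator of the $V^\epsilon$-equation will have a linear part $\nabla_y \tilde f^\epsilon(t) V^\epsilon_t + \nabla_z \tilde f^\epsilon(t) Z^{V^\epsilon}_t$ (which is harmless because $|\nabla_y \tilde f^\epsilon|, |\nabla_z \tilde f^\epsilon| \le K$), plus two ``source'' terms that must be shown to be $o(\epsilon)$ in $L^2_{\mathscr F}(0,T;\mathbb R^n)$. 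The first source term collects the first-order Taylor remainders in the $(y,z)$-directions, namely things like $\epsilon\big(\nabla_y\tilde f^\epsilon(t) - \nabla_y f(t)\big)V_t$ and $\epsilon\big(\nabla_z\tilde f^\epsilon(t) - \nabla_z f(t)\big)Z^V_t$. The second source term is the difference of the measure-derivative contributions,
$$
\epsilon\!\int_0^1\!\!\int \Big(\tfrac{\delta f}{\delta m}\big(t,Y^\epsilon_t,Z^\epsilon_t,(1-\lambda)\mu_t+\lambda\mu^\epsilon_t\big)(a) - \tfrac{\delta f}{\delta m}\big(t,Y^\mu_t,Z^\mu_t,\mu_t\big)(a)\Big)(\pi_t(a)-\mu_t(a))\,da\,d\lambda.
$$

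For the first source term I would use continuity of $\nabla_y f, \nabla_z f$ together with their uniform bound $K$ and Lemma 3.1 (which gives $Y^\epsilon_t - Y^\mu_t, Z^\epsilon_t - Z^\mu_t = O(\epsilon)$ in the relevant norms, hence $Y^\mu_t + \lambda(Y^\epsilon_t - Y^\mu_t) \to Y^\mu_t$, and similarly for $z$): by dominated convergence, $\mathbb E\int_0^T |\nabla_y\tilde f^\epsilon(t) - \nabla_y f(t)|^2\,dt \to 0$, so multiplying by $\epsilon\, V_t$ (which is in $S^2_{\mathscr F}$ by \eqref{eq:variation eq's estimate}) and using Cauchy--Schwarz yields an $o(\epsilon^2)$ contribution after squaring. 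For the measure term, I would split it as a part controlling the change in $(y,z)$ and a part controlling the change in the measure argument: the Lipschitz hypothesis $\big|\tfrac{\delta f}{\delta m}(t,y,z,m) - \tfrac{\delta f}{\delta m}(t,y',z',m)\big| \le K(|y-y'|+|z-z'|)$ from Assumption 2.5(i) handles the first part (bounded by $K(|Y^\epsilon_t - Y^\mu_t| + |Z^\epsilon_t - Z^\mu_t|)\int(\pi_t + \mu_t)\,da = O(\epsilon)$ by Lemma 3.1), while the $C^1$-in-$m$ regularity and the uniform bound $|\tfrac{\delta^2 f}{\delta m^2}| \le K$ control the change from $\mu_t$ to $(1-\lambda)\mu_t + \lambda\mu^\epsilon_t = \mu_t + \lambda\epsilon(\pi_t - \mu_t)$, again producing a factor $\epsilon$; the moment bounds $\int|a|^2(\pi_t+\mu_t)\,da < \infty$ from the definition of $\mathcal A$ ensure the $a$-integrals are finite. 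Combining, the total source is $o(\epsilon)$ in $L^2_{\mathscr F}(0,T;\mathbb R^n)$, and Gronwall/the BSDE estimate upgrades this to the claimed $o(\epsilon^2)$ bound on $\mathbb E[\sup_t |V^\epsilon_t|^2] + \mathbb E\int_0^T |Z^{V^\epsilon}_t|^2\,dt$.

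The main obstacle I anticipate is the careful bookkeeping of the measure-derivative term: one must interpolate simultaneously in $(y,z)$ and in $m$ along the segment $\mu_t + \lambda\epsilon(\pi_t - \mu_t)$, keep track of the extra $\lambda$-integration from the flat-derivative formula, and verify that all the $a$-integrals $\int(1+|a|)^2(\pi_t(a)+\mu_t(a))\,da$ are integrable in $(\omega,t)$ using the admissibility constraints---so that every estimate genuinely carries a factor $\epsilon$ beyond the $O(\epsilon)$ already extracted, giving $o(\epsilon)$ rather than merely $O(\epsilon)$ in the source, hence $o(\epsilon^2)$ after the BSDE estimate. The other terms are routine. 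I would also remark that the same scheme (a priori BSDE estimate plus dominated convergence for the difference quotients of $\nabla f$) is exactly the mechanism used in the forward-case convex variation arguments, so no new ideas beyond Assumption 2.5 and Lemma 3.1 are needed.
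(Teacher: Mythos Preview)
Your proposal is correct and follows essentially the same route as the paper's proof: derive the BSDE for $(V^\epsilon,Z^{V^\epsilon})$ with linear part $\nabla_y\tilde f^\epsilon V^\epsilon+\nabla_z\tilde f^\epsilon Z^{V^\epsilon}$ and source terms $\epsilon(\nabla_x\tilde f^\epsilon-\nabla_x f)V/Z^V$ plus the $\tfrac{\delta f}{\delta m}$-difference, then use the a~priori BSDE estimate to reduce to showing the squared source is $o(\epsilon^2)$, handling the gradient differences by continuity plus dominated convergence and the measure-derivative part by the Lipschitz-in-$(y,z)$ hypothesis together with the bound on $\tfrac{\delta^2 f}{\delta m^2}$. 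The only point the paper makes explicit that you left implicit is the passage from $L^2$-convergence of $(Y^\epsilon,Z^\epsilon)$ to a.e.\ convergence along a subsequence (needed for the dominated-convergence step) and the subsequent use of Heine's theorem to recover the full limit $\epsilon\to0^+$.
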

    \begin{proof}First note that BSDE
        \begin{equation}\label{eq:epsilon variation eq}
        \left\{\begin{aligned}
            d \epsilon V_t & = -\epsilon \Big(\nabla_y f(t)  V_t + \nabla_z f(t)Z^{V}_t \Big )dt + \epsilon Z^{V}_t dW_t \\
            & \quad + \epsilon \int_0^1 \int \frac{\delta f}{\delta m}(t,Y^\mu_t, Z^\mu_t, \mu_t)(a)(\pi_t(a) - \mu_t(a))dad\lambda dt ,\\
            \epsilon V_T & = 0,
        \end{aligned}\right.
    \end{equation}
        has a unique solution. Based on BSDEs (\ref{eq: difference eq}) and (\ref{eq:epsilon variation eq}), by the continuity dependence of the solutions to BSDEs, we have
       $$
        \begin{aligned}
            & \mathbb{E} \left[\sup_{t \in [0,T]} |V^{\epsilon}_t|^2 \right] + \mathbb{E} \left[\int_{0}^{T} |Z^{V^{\epsilon}}_t|^2 dt \right] \leq C \epsilon^2 h(\epsilon),
        \end{aligned}
        $$
        where
      $$
       \begin{aligned}
            &h(\epsilon):= \mathbb{E}\bigg[\int_0^T \bigg(\Big| \left(\nabla_y\tilde{f}^{\epsilon}(t) - \nabla_y f(t)\right)  V_t + \left(\nabla_z\tilde{f}^{\epsilon}(t) - \nabla_z f(t)\right) Z^{V}_t\\
            & \quad \quad\quad\quad\quad\quad\ \ \ + \int_0^1 \int \left(\frac{\delta f}{\delta m}(t,Y^{\epsilon}_t, Z^{\epsilon}_t, (1-\lambda)\mu_t + \lambda \mu^{\epsilon}_t)(a)\right.\\
            & \quad \quad \quad \quad \quad\quad\quad\quad\quad \quad \ \ \ \ \ \left.- \frac{\delta f}{\delta m}(t,Y^\mu_t, Z^\mu_t, \mu_t)(a)\right) \left(\pi_t(a) - \mu_t(a) \right) da d\lambda  \Big|^2 \bigg)d t\bigg].
        \end{aligned}
        $$
        It is sufficient to show that $\lim_{\epsilon \rightarrow 0^+} h(\epsilon) = 0$. Note that
            \begin{equation}\label{eq:h(epsilon)}
         \begin{aligned}
            h(\epsilon) & \leq 2 \mathbb{E}\bigg[\int_0^T \Big| \left(\nabla_y\tilde{f}^{\epsilon}(t) - \nabla_y f(t)\right)  V_t+ \left(\nabla_z\tilde{f}^{\epsilon}(t) - \nabla_z f(t)\right) Z^{V}_t \Big|^2 d t\bigg]\\
            & \quad + 2\mathbb{E}\Big[\int_0^T \Big|\int_0^1 \int \left(\frac{\delta f}{\delta m}(t,Y^{\epsilon}_t, Z^{\epsilon}_t, (1-\lambda)\mu_t + \lambda \mu^{\epsilon}_t)(a) \right. \\
            & \quad \quad \quad \quad \quad \quad \quad \quad \quad \quad \left.- \frac{\delta f}{\delta m}(t,Y^\mu_t, Z^\mu_t, \mu_t)(a)\right) \left(\pi_t(a) - \mu_t(a)\right) da d\lambda\Big|^2 dt \Big] \\
            & \leq 2\mathbb{E}\left[\int_0^T\left(h_1(t,\epsilon) + h_2(t,\epsilon)\right)dt\right],
        \end{aligned}
        \end{equation}
where $$
        \left\{\begin{aligned}
             h_1(t,\epsilon)  &= \left|\left(\nabla_y\tilde{f}^{\epsilon}(t) - \nabla_y f(t)\right)^2  + \left(\nabla_z\tilde{f}^{\epsilon}(t) - \nabla_z f(t)\right)^2  \right|\left(|V_t|^2 + |Z^{V}_t|^2\right),\\
             h_2(t,\epsilon)
             &= \Big|\int_0^1 \int \left(\frac{\delta f}{\delta m}(t,Y^{\epsilon}_t, Z^{\epsilon}_t, (1-\lambda)\mu_t + \lambda \mu^{\epsilon}_t)(a) \right. \\
            & \quad \quad \quad \quad \quad \ \ \left.-\frac{\delta f}{\delta m}(t,Y^\mu_t, Z^\mu_t, \mu_t)(a)\right)\left(\pi_t(a) - \mu_t(a)\right) da d\lambda \Big|^2.
        \end{aligned}\right.
        $$
Set $$
        \left\{\begin{aligned}
             h_1(t)  &= 8K^2 (|V_t|^2 + |Z^{V}_t|^2),\\
             h_2(t) &= 32K^2 + 16K^2 \left(|Y^{\epsilon}_t - Y_t|^2 +  |Z^{\epsilon}_t - Z_t|^2 \right).
        \end{aligned}\right.
        $$
        By Assumption \ref{H: bsde} and regularity of probability density, we further have for $i \in \{1,2\}$ and any $0 < \epsilon < 1$,
        \begin{equation}\label{eq: dominate}
            h_i(t,\epsilon) \leq h_i(t)\ \ {\rm a.e.} \ {\rm a.s.}
        \end{equation}
Actually, by a priori estimate (\ref{eq:variation eq's estimate}) and Lemma \ref{lem:big order estimate}, for $i = 1,2$,
        \begin{equation}\label{eq: dominate integrable}
            \mathbb{E}\left[\int_0^T h_i(t,\epsilon) dt\right] < \infty.
        \end{equation}

        As $\epsilon_n \downarrow 0$, by definition of $\mu_t^{\epsilon}$, we know that $\mu_t^{\epsilon_n}$ converges weakly 
        to $\mu_t$ a.e. a.s. Also by Lemma \ref{lem:big order estimate} it is clear that $(Y^{\epsilon_n}_t, Z^{\epsilon_n}_t) $ converges in $S_{\mathscr{F}}^2\left(0, T ; \mathbb{R}^n\right) \times L_{\mathscr{F}}^2\left(0, T ; \mathbb{R}^{n\times m}\right)$, so there exists a subsequence of $\{\epsilon_n\}$, still denoted by $\{\epsilon_n\}$, such that $(Y^{\epsilon_n}_t, Z^{\epsilon_n}_t) $ converges to $(Y^\mu_t, Z^\mu_t)$ a.e. a.s. By the continuity of $\nabla_y f$ with respect to $(y,z)$, we have for $0 \leq \lambda \leq 1$,
      $$
        \begin{aligned}
        \lim_{n \rightarrow \infty} \left|\nabla_y f(t, Y^\mu_t + \lambda (Y^{\epsilon_n}_t - Y^\mu_t), Z_t + \lambda (Z^{\epsilon_n}_t - Z^\mu_t), \mu_t) - \nabla_y f(t) \right|= 0. 
        \end{aligned}
        $$
Hence by the dominated convergence theorem,
        $$
        \begin{aligned}
           & \lim_{n \rightarrow \infty} |\nabla_y\tilde{f}^{\epsilon_n}(t) - \nabla_y f(t)|^2\\
            & \leq \lim_{n \rightarrow \infty} \int_0^1 |\nabla_y f(t, Y^\mu_t + \lambda (Y^{\epsilon_n}_t - Y^\mu_t), Z^\mu_t + \lambda (Z^{\epsilon_n}_t - Z^\mu_t), \mu_t) - \nabla_y f(t)|^2 d \lambda\\
            & =0\ \ {\rm a.e.}\ {\rm a.s.}
        \end{aligned}
        $$
        Similarly, $\lim_{n \rightarrow \infty} |\nabla_z\tilde{f}^{\epsilon_n}(t) - \nabla_z f(t)|^2 = 0$.

        For the term of control, we notice that
            {\small$$\begin{aligned}
            &\int \Big(\frac{\delta f}{\delta m}(t,Y^{\epsilon_n}_t, Z^{\epsilon_n}_t, (1-\lambda)\mu_t + \lambda \mu^{\epsilon_n}_t)(a) - \frac{\delta f}{\delta m}(t,Y^\mu_t, Z^\mu_t, \mu_t)(a)\Big)(\pi_t(a) - \mu_t(a)) da  \\
            & = \int \Big(\frac{\delta f}{\delta m}(t,Y^{\epsilon_n}_t, Z^{\epsilon_n}_t, (1-\lambda)\mu_t + \lambda \mu^{\epsilon_n}_t)(a) - \frac{\delta f}{\delta m}(t,Y^{\epsilon_n}_t, Z^{\epsilon_n}_t, \mu_t)(a)\Big)(\pi_t(a) - \mu_t(a)) da\\
            & \quad + \int \Big(\frac{\delta f}{\delta m}(t,Y^{\epsilon_n}_t, Z^{\epsilon_n}_t,  \mu_t)(a) - \frac{\delta f}{\delta m}(t,Y^\mu_t, Z^\mu_t, \mu_t)(a)\Big)(\pi_t(a) - \mu_t(a)) da.
        \end{aligned}$$}
For $0\leq\lambda^{\prime}\leq1$ and $\mu_t^{\lambda,\lambda^{\prime}} := (1-\lambda^{\prime})\mu_t + \lambda^{\prime}((1-\lambda)\mu_t + \lambda \mu_t^{\epsilon_n}) = \mu_t + \lambda \lambda^{\prime}(\mu_t^{\epsilon_n} - \mu_t)$,
      \begin{equation}\label{eq: estimate f2}
            \begin{aligned}
            &\Big|\mathbb{E} \Big[ \int_0^T \int_0^1 \int \Big(\frac{\delta f}{\delta m}(t,Y^{\epsilon_n}_t, Z^{\epsilon_n}_t, (1-\lambda)\mu_t + \lambda \mu^{\epsilon_n}_t)(a)\\
            & \quad \quad \quad \quad \quad \quad \quad -  \frac{\delta f}{\delta m}(t,Y^{\epsilon_n}_t, Z^{\epsilon_n}_t, \mu_t )(a)\Big) \epsilon_n(\pi_t(a)-\mu_t(a)) da d \lambda dt\Big] \Big|^2\\
            &  \leq \mathbb{E}\Big[  \int_0^T \Big| \int_0^1 \int_0^1 \lambda \int \int \frac{\delta^2 f}{\delta m^2}(t,Y^{\epsilon_n}_t, Z^{\epsilon_n}_t, \mu_t^{\lambda,\lambda^{\prime}})(a,a^{\prime}) \epsilon_n(\pi_t(a^{\prime})-\mu_t(a^{\prime})) d a^{\prime}\\
            & \quad \quad \quad \quad \quad \quad \quad \quad \quad \quad  \quad \cdot\epsilon_n(\pi_t(a)-\mu_t(a)) da d \lambda d \lambda^{\prime}\Big|^2 dt\Big]\\
            & \leq \mathbb{E} \Big[ \int_0^T \Big| \int_0^1 \int_0^1 \lambda K \int (\pi_t(a^{\prime})+\mu_t(a^{\prime})) d a^{\prime}\int(\pi_t(a)+\mu_t(a)) da d \lambda d \lambda^{\prime}\Big|^2 dt\Big] \epsilon_n^2\\
            & =  C_{K,T}\epsilon_n^2= o(\epsilon_n).
            \end{aligned}
        \end{equation}
Then, since $\frac{\delta f}{\delta m}$ is uniformly Lipschitz in $(y,z)$ for any fixed $m \in \mathcal{A}$, by Lemma \ref{lem:big order estimate} we get
        {\small\begin{align}\label{eq: estimate f3}
            &\Big|\mathbb{E} \Big[ \int_0^T \int_0^1 \int \Big(\frac{\delta f}{\delta m}(t,Y^{\epsilon_n}_t, Z^{\epsilon_n}_t, \mu_t)(a)-  \frac{\delta f}{\delta m}(t,Y^\mu_t, Z^\mu_t, \mu_t )(a)\Big) \epsilon_n(\pi_t(a)-\mu_t(a)) da d \lambda dt\Big] \Big|^2 \notag \\
            & \leq \Big|\mathbb{E} \Big[ \int_0^T \int_0^1 \int K(|Y^{\epsilon_n}_t - Y^\mu_t|+ |Z^{\epsilon_n}_t-Z^\mu_t| ) \epsilon_n(\pi_t(a)+\mu_t(a)) da d \lambda dt\Big] \Big|^2 \notag \\
            & \leq C_K \mathbb{E}\left[\int_0^T (|Y^{\epsilon_n}_t - Y^\mu_t|^2+ |Z^{\epsilon_n}_t-Z^\mu_t|^2) dt\right]\epsilon_n^2= o(\epsilon_n^2) , \notag
        \end{align}}
        which together with (\ref{eq: estimate f2}) leads to
            \begin{equation*}\label{eq: estimate f4}
            \begin{aligned}
            &\mathbb{E} \Big[ \int_0^T \int_0^1 \int \left(\frac{\delta f}{\delta m}(t,Y^{\epsilon_n}_t, Z^{\epsilon_n}_t, (1-\lambda)\mu_t + \lambda \mu^{\epsilon_n}_t)(a)-  \frac{\delta f}{\delta m}(t,Y^\mu_t, Z^\mu_t, \mu_t )(a)\right)\\
            &\quad \quad \quad \quad \quad \ \ \cdot \epsilon_n(\pi_t(a)-\mu_t(a)) da d \lambda dt\Big] \\
            & = o(\epsilon_n).
            \end{aligned}
        \end{equation*}

        As a result,
        \begin{equation}\label{eq: limit}
            \lim_{n \rightarrow \infty} |h_1(t,\epsilon_n) + h_2(t,\epsilon_n)| =0\ \ {\rm a.e.} \ {\rm a.s.}
        \end{equation}
        By (\ref{eq: dominate}), (\ref{eq: dominate integrable}), (\ref{eq: limit}) and the dominated convergence theorem, we have
        \begin{equation*}
            \lim_{n \rightarrow \infty} \mathbb{E} \left[ \int_0^T \left(h_1(t,\epsilon_n) + h_2(t,\epsilon_n) \right) dt\right] =0.
        \end{equation*}
Hence
     $
            \lim_{n \rightarrow \infty} h(\epsilon_n) =0$ follows from (\ref{eq:h(epsilon)}).

   Due to the arbitrariness of $\epsilon_n$ and Heine Theorem, it yields that
       $
            \lim_{\epsilon \rightarrow 0^{+}} h(\epsilon) =0$.
        \end{proof}

The following lemma comes from Lemma 3.2 in \cite{siska-2020} and will be used in the variation inequality of cost functional.
    \begin{lemma}\label{lem: semi continuity of entropy 01}
For $\pi, \mu,\gamma \in \mathcal{A}$, set $\mu^{\varepsilon}=\mu+\varepsilon(\pi-\mu)$. Then\\
i) for any $\varepsilon \in(0,1)$,
{\small$$
\frac{1}{\varepsilon} \int_0^T\left(Ent\left(\mu_t^e \mid \gamma_t\right)-Ent\left(\mu_t \mid \gamma_t\right)\right) d t \geq \int_0^T \int\left(\ln \mu_t(a)-\ln \gamma_t(a)\right)\left(\pi_t(a)-\mu_t(a)\right)d a d t;
$$}
ii)
\begin{align*}
&\limsup _{\varepsilon \rightarrow 0} \frac{1}{\varepsilon} \int_0^T\left(Ent\left(\mu_t^\epsilon \mid \gamma_t\right)-Ent\left(\mu_t \mid \gamma_t\right)\right) d t\\
&\leq \int_0^T \int\left(\ln \mu_t(a)-\ln \gamma_t(a)\right)\left(\pi_t(a)-\mu_t(a)\right)d a d t.
\end{align*}
\end{lemma}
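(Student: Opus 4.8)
The plan is to reduce the whole statement to the elementary convexity of the scalar map $\rho(u):=u\ln u$ (with $\rho(0):=0$) together with the ``increasing slopes'' property of convex functions, in the spirit of Lemma 3.2 in \cite{siska-2020}. Fix $(\omega,t)$, and for $a\in\mathbb{R}^p$ put $\rho_a(u):=u\ln u-u\ln\gamma_t(a)$ on $[0,\infty)$; this is convex, continuously differentiable on $(0,\infty)$ with the \emph{nondecreasing} derivative $\rho_a'(u)=\ln u-\ln\gamma_t(a)+1$, and since $\mu_t^\varepsilon(a)=\mu_t(a)+\varepsilon(\pi_t(a)-\mu_t(a))$ one has $\int\rho_a(\mu_t^\varepsilon(a))\,da=Ent(\mu_t^\varepsilon\mid\gamma_t)$. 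Two facts will be used throughout. First, because $\mu_t$ and $\pi_t$ are probability densities, $\int(\pi_t(a)-\mu_t(a))\,da=0$, so any constant added inside $\rho_a'$ (in particular the $+1$) disappears after integrating against $\pi_t-\mu_t$. Second, for a convex function $\phi$ on $[0,1]$ the difference quotient $\varepsilon\mapsto(\phi(\varepsilon)-\phi(0))/\varepsilon$ is nondecreasing on $(0,1]$, hence converges as $\varepsilon\downarrow0$ to $\phi_+'(0)=\inf_{\varepsilon\in(0,1]}(\phi(\varepsilon)-\phi(0))/\varepsilon\in[-\infty,\infty)$.

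\textbf{Part i).} I would apply the tangent-line (subgradient) inequality for the convex function $\rho_a$ at the point $\mu_t(a)$, evaluated at $\mu_t^\varepsilon(a)$:
\[
\rho_a(\mu_t^\varepsilon(a))-\rho_a(\mu_t(a))\ \ge\ \rho_a'(\mu_t(a))\big(\mu_t^\varepsilon(a)-\mu_t(a)\big)=\varepsilon\big(\ln\mu_t(a)-\ln\gamma_t(a)+1\big)\big(\pi_t(a)-\mu_t(a)\big),
\]
which stays valid when $\mu_t(a)=0$ under the conventions $\ln0=-\infty$, $0\cdot(-\infty)=0$ (if $\pi_t(a)=\mu_t(a)=0$ both sides are $0$; if $\pi_t(a)>0=\mu_t(a)$ the right-hand side is $-\infty$). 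Integrating in $a$, discarding the $+1$ by the first fact, then dividing by $\varepsilon>0$ and integrating in $t$ gives exactly i). No limiting argument is needed here.

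\textbf{Part ii).} Here I would use that $\varepsilon\mapsto\rho_a(\mu_t^\varepsilon(a))$ is convex on $[0,1]$, so by the second fact the quotient $q_a(\varepsilon):=\big(\rho_a(\mu_t^\varepsilon(a))-\rho_a(\mu_t(a))\big)/\varepsilon$ is nondecreasing in $\varepsilon$ and \emph{decreases}, as $\varepsilon\downarrow0$, to $\big(\ln\mu_t(a)-\ln\gamma_t(a)+1\big)\big(\pi_t(a)-\mu_t(a)\big)$ (again with the conventions at $\mu_t(a)=0$, where this limit is $-\infty$ on $\{\pi_t>0=\mu_t\}$). Moreover $q_a(\varepsilon)\le q_a(1)=\pi_t(a)\ln\frac{\pi_t(a)}{\gamma_t(a)}-\mu_t(a)\ln\frac{\mu_t(a)}{\gamma_t(a)}$ for $\varepsilon\in(0,1]$, and $\int_0^T\!\int q_a(1)\,da\,dt=\int_0^T\big(Ent(\pi_t\mid\gamma_t)-Ent(\mu_t\mid\gamma_t)\big)\,dt<\infty$ a.s. So I would pass $\varepsilon\downarrow0$ inside $\int_0^T\!\int(\cdot)\,da\,dt$ via the monotone (or dominated) convergence theorem applied to $q_a(1)-q_a(\varepsilon)\ge0$, and then discard the $+1$ by the first fact; this yields not merely ii) but the full limit
\[
\lim_{\varepsilon\downarrow0}\frac1\varepsilon\int_0^T\big(Ent(\mu_t^\varepsilon\mid\gamma_t)-Ent(\mu_t\mid\gamma_t)\big)\,dt=\int_0^T\!\int\big(\ln\mu_t(a)-\ln\gamma_t(a)\big)\big(\pi_t(a)-\mu_t(a)\big)\,da\,dt.
\]

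\textbf{Main obstacle.} The only genuinely delicate point is the integrability required in part ii) to interchange $\lim_{\varepsilon\downarrow0}$ with the $a$- and $t$-integrals: one must bound $\rho_a$ below near $u=0$ (using $u\ln u\ge-1/e$, so $\rho_a(u)\ge-\gamma_t(a)/e$, which keeps the negative part of every entropy integrand controlled by $\int\gamma_t\,da=1$) and exhibit an integrable majorant for $q_a(\varepsilon)$, which is where the a.s.\ finiteness of $Ent(\pi_t\mid\gamma_t)$ and $Ent(\mu_t\mid\gamma_t)$ — automatic for problem (\textbf{P1}), where $\gamma_t\equiv e^{-U}$ — and, if one wishes to keep $\mu^\varepsilon\in\mathcal{A}$, the convexity of $\mathcal{A}$ noted above, enter. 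The zeros of $\mu_t$, where the right derivative of $\rho_a$ degenerates to $-\infty$, cause no trouble because both sides of each inequality degenerate consistently under the stated conventions; everything else is the two-line convexity estimates displayed above.
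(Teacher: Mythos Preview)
The paper does not prove this lemma; it simply records that the result ``comes from Lemma 3.2 in \cite{siska-2020}'' and gives no argument. Your proof is correct and is exactly the standard convexity argument one expects there: the subgradient inequality for $u\mapsto u\ln u$ gives i) in one line, and the monotonicity of difference quotients of a convex function together with monotone convergence gives ii) (indeed the full limit, not just the $\limsup$). You have also correctly isolated the only genuinely delicate point---the integrability needed to pass the limit through the $da\,dt$ integral in ii)---and noted that it is automatic in the application the paper actually makes, namely $\gamma_t\equiv e^{-U}$.
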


Before we prove the variation inequality of cost functional, we give an expression for the G\^{a}teaux derivative of $J^0$ in terms of the functional derivative of the Hamiltonian.
\begin{proposition}\label{proposition: variation ineq1}
        Under Assumption \ref{H: bsde},
        \begin{equation*}
        \begin{aligned}
             \lim_{\epsilon \rightarrow 0}\frac{J^0(\mu^{\epsilon}) -J^0(\mu)}{\epsilon} &= \mathbb{E}\Big[ \nabla_y \phi (Y_0)V_0 +  \int_0^T ( \nabla_y l(t)V_t + \nabla_z l(t)Z^V_t \\
             &  \quad \quad + \int\frac{\delta l}{\delta m}(t, Y^\mu_t, Z^\mu_t, \mu_t)(a)(\pi_t(a) - \mu_t(a))da )dt \Big].
        \end{aligned}
        \end{equation*}
    \end{proposition}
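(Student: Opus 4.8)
The plan is to decompose $J^0(\mu^\epsilon) - J^0(\mu)$ into its running-cost part and its terminal-cost part, divide by $\epsilon$, and pass to the limit term by term using the estimates already established. Write
\begin{equation*}
\frac{J^0(\mu^\epsilon) - J^0(\mu)}{\epsilon} = \mathbb{E}\left[\frac{\phi(Y^\epsilon_0) - \phi(Y^\mu_0)}{\epsilon}\right] + \mathbb{E}\left[\int_0^T \frac{l(t, Y^\epsilon_t, Z^\epsilon_t, \mu^\epsilon_t) - l(t, Y^\mu_t, Z^\mu_t, \mu_t)}{\epsilon}\, dt\right].
\end{equation*}
For the terminal term, I would write $\phi(Y^\epsilon_0) - \phi(Y^\mu_0) = \int_0^1 \nabla_y\phi(Y^\mu_0 + \lambda(Y^\epsilon_0 - Y^\mu_0))\,d\lambda \cdot (Y^\epsilon_0 - Y^\mu_0)$, then substitute $Y^\epsilon_0 - Y^\mu_0 = \epsilon V_0 + V^\epsilon_0$ and use Lemma \ref{lem: small order estimate } ($\mathbb{E}|V^\epsilon_0|^2 = o(\epsilon^2)$) together with Lemma \ref{lem:big order estimate} ($Y^\epsilon_0 \to Y^\mu_0$ in $L^2$) and the continuity and linear growth of $\nabla_y\phi$ from Assumption \ref{H: bsde}(ii); dominated convergence then yields $\mathbb{E}[\nabla_y\phi(Y^\mu_0) V_0]$.

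For the running-cost term, I would split the increment of $l$ into a $(y,z)$-increment at fixed control $\mu^\epsilon_t$ and a measure-increment at the base point $(Y^\mu_t, Z^\mu_t)$:
\begin{align*}
l(t, Y^\epsilon_t, Z^\epsilon_t, \mu^\epsilon_t) - l(t, Y^\mu_t, Z^\mu_t, \mu_t)
&= \big(l(t, Y^\epsilon_t, Z^\epsilon_t, \mu^\epsilon_t) - l(t, Y^\mu_t, Z^\mu_t, \mu^\epsilon_t)\big) \\
&\quad + \big(l(t, Y^\mu_t, Z^\mu_t, \mu^\epsilon_t) - l(t, Y^\mu_t, Z^\mu_t, \mu_t)\big).
\end{align*}
The first bracket I expand via $\nabla_x\tilde{l}^\epsilon$ as in \eqref{eq: difference eq}, substitute $Y^\epsilon_t - Y^\mu_t = \epsilon V_t + V^\epsilon_t$ and likewise for $Z$, divide by $\epsilon$, and send $\epsilon \to 0$: the $\epsilon V_t$ piece gives $\nabla_y l(t) V_t + \nabla_z l(t) Z^V_t$ after showing $\nabla_x\tilde{l}^\epsilon(t) \to \nabla_x l(t)$ (using that the control slot is $\mu^\epsilon_t \rightharpoonup \mu_t$ and continuity of $\nabla_x l$ in all arguments, exactly as in the proof of Lemma \ref{lem: small order estimate }), while the $V^\epsilon_t/\epsilon$ piece vanishes by Lemma \ref{lem: small order estimate } and the uniform bound $|\nabla_x l| \le K(1+|y|+|z|)$ combined with the $L^2$-estimates. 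The second bracket is handled by the linear (flat) derivative: $l(t, Y^\mu_t, Z^\mu_t, \mu^\epsilon_t) - l(t, Y^\mu_t, Z^\mu_t, \mu_t) = \int_0^1 \int \frac{\delta l}{\delta m}(t, Y^\mu_t, Z^\mu_t, \mu_t + \lambda\epsilon(\pi_t - \mu_t))(a)\,\epsilon(\pi_t(a) - \mu_t(a))\, da\, d\lambda$; dividing by $\epsilon$ and using the second-order bound $|\frac{\delta^2 l}{\delta m^2}| \le K$ (to control the difference between $\frac{\delta l}{\delta m}$ at the perturbed measure and at $\mu_t$, giving an $O(\epsilon)$ error just as in \eqref{eq: estimate f2}) together with $|\frac{\delta l}{\delta m}(t,\cdot)(a)| \le K(1+|a|^2)$ and $\pi, \mu \in \mathcal{A}$ for the dominating function, I get $\int \frac{\delta l}{\delta m}(t, Y^\mu_t, Z^\mu_t, \mu_t)(a)(\pi_t(a) - \mu_t(a))\, da$ in the limit.

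Assembling these pieces and justifying the interchange of limit, expectation and $dt$-integral by a dominated-convergence argument (the dominating functions being furnished by Assumption \ref{H: bsde}(ii), the a priori estimate preceding \eqref{eq:variation eq's estimate}, Lemma \ref{lem:big order estimate} and Lemma \ref{lem: small order estimate }) gives the claimed identity. The main obstacle I anticipate is the rigorous handling of the measure-increment term: unlike the $(y,z)$-variables, the control enters through a flat derivative, so one must carefully control the dependence of $\frac{\delta l}{\delta m}$ on the perturbed argument $\mu_t + \lambda\epsilon(\pi_t - \mu_t)$ — this is where the uniform bound on $\frac{\delta^2 l}{\delta m^2}$ and the $|a|^2$-integrability built into $\mathcal{A}$ are essential, and the estimate mirrors \eqref{eq: estimate f2}–\eqref{eq: estimate f3} almost verbatim with $f$ replaced by $l$. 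A secondary point of care is that the dominated convergence in the $(y,z)$-slot requires first passing to an a.s.-convergent subsequence of $\epsilon_n$ (as in Lemma \ref{lem: small order estimate }) and then invoking Heine's theorem to recover the full limit.
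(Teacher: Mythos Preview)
Your proposal is essentially the paper's own argument: split into terminal and running cost, Taylor-expand in $(y,z)$, use the flat derivative for the measure increment, and control the remainders via Lemmas~\ref{lem:big order estimate}--\ref{lem: small order estimate } together with dominated convergence and a subsequence/Heine step. The one substantive difference is the order of your two-step split of the $l$-increment: you take the $(y,z)$-increment at the \emph{perturbed} control $\mu^\epsilon_t$ and the measure-increment at the \emph{base} state $(Y^\mu_t,Z^\mu_t)$, whereas the paper does the reverse (state increment at base control $\mu_t$, measure increment at perturbed state $(Y^\epsilon_t,Z^\epsilon_t)$, then corrected back via the Lipschitz bound on $\frac{\delta l}{\delta m}$ in $(y,z)$ as in \eqref{eq: estimate3}). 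Your order obliges you to claim continuity of $\nabla_x l$ in the measure argument, which Assumption~\ref{H: bsde} does not state directly; it can be recovered from the uniform Lipschitz condition on $\frac{\delta l}{\delta m}$ in $(y,z)$ (that condition forces $|\nabla_y l(t,y,z,\mu^\epsilon)-\nabla_y l(t,y,z,\mu)|\le 2K\epsilon$), so your route is correct, but the paper's ordering avoids this extra step and is marginally cleaner under the hypotheses as written.
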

\begin{proof}By the variation equation (\ref{eq:variation eq}), we have
        {\small\begin{align}\label{eq:diff func}
            &J^0(\mu^{\epsilon}) -J^0(\mu) \notag\\
            & = \mathbb{E}\Big[\int_0^T\Big(l(t,Y^{\epsilon}_t, Z^{\epsilon}_t, \mu_t^{\epsilon})-l(t,Y^\mu_t, Z^\mu_t, \mu_t)\Big)dt + \phi(Y^{\epsilon}_0) - \phi(Y^\mu_0)\Big] \notag\\
            & = \mathbb{E}\Big[\int_0^T \Big(\int_0^1 \int \frac{\delta l}{\delta m}(t,Y^{\epsilon}_t, Z^{\epsilon}_t, (1-\lambda)\mu_t + \lambda \mu^{\epsilon}_t)(a) \epsilon\left(\pi_t(a)-\mu_t(a)\right) da d \lambda \notag\\
            & \quad \quad \quad \quad \quad+ \nabla_z \tilde{l}^{\epsilon}(t)(Z^{V^{\epsilon}}_t + \epsilon Z^{V}_t)+  \nabla_y \tilde{l}^{\epsilon}(t)(V^{\epsilon}_t + \epsilon V_t)\Big)dt \notag\\
            & \ \quad \quad \ + \int_0^1 \nabla_y \phi\left(Y^\mu_0 + \lambda \epsilon (V^{\epsilon}_0 + V_0)\right)(V^{\epsilon}_0 + \epsilon V_0)d\lambda\Big] \notag\\
             & = \epsilon \mathbb{E}\Big[\int_0^T \Big( \nabla_y l(t)V_t + \nabla_z l^{}(t)Z^V_t+ \int\frac{\delta l}{\delta m}(t, Y^\mu_t, Z^\mu_t, \mu_t)(a)(\pi_t(a) - \mu_t(a))da \Big)dt \notag\\
             & \quad \quad \quad + \nabla_y \phi (Y^\mu_0)V_0\Big]+ \mathbb{E}\Big[\int_0^T \big(\nabla_y l(t)V^{\epsilon}_t  + \nabla_z l(t)Z^{V^{\epsilon}}_t )dt\Big]\notag\\
             & \quad + \mathbb{E}\Big[\int_0^T \Big((\nabla_y \tilde{l}^{\epsilon}(t)-\nabla_y l(t))(V^{\epsilon}_t + \epsilon V_t)  + (\nabla_z \tilde{l}^{\epsilon}(t)-\nabla_z l(t))(Z^{V^{\epsilon}}_t + \epsilon Z^{V}_t) \notag\\
            & \quad + \mathbb{E} \Big[ \int_0^T \int_0^1 \int \Big(\frac{\delta l}{\delta m}(t,Y^{\epsilon}_t, Z^{\epsilon}_t,(1-\lambda)\mu_t + \lambda \mu^{\epsilon}_t)(a)-  \frac{\delta l}{\delta m}(t,Y^{\epsilon}_t, Z^{\epsilon}_t, \mu_t )(a)\Big)\notag\\
            & \quad \quad \quad \quad \quad \quad \quad \quad \ \ \ \cdot \epsilon \left(\pi_t(a)-\mu_t(a) \right) da d \lambda dt\Big] \notag\\
            & \quad + \mathbb{E} \Big[ \int_0^T \int_0^1 \int \Big(\frac{\delta l}{\delta m}(t,Y^{\epsilon}_t, Z^{\epsilon}_t, \mu_t)(a) -  \frac{\delta l}{\delta m}(t,Y^\mu_t, Z^\mu_t, \mu_t )(a)\Big)\epsilon \left(\pi_t(a)-\mu_t(a) \right) da d \lambda dt\Big] \notag\\
            & \quad + \mathbb{E}\Big[ \nabla_y \phi(Y^\mu_0)V^{\epsilon}_0 +\int_0^1  \left(\nabla_y \phi^{}\left(Y^\mu_0 + \lambda  (V^{\epsilon}_0 + \epsilon V_0)\right) - \nabla_y \phi^{}(Y^\mu_0)\right)(V^{\epsilon}_0 + \epsilon V_0)d \lambda \Big]. \notag\\
        \end{align}}
First by Assumption \ref{H: bsde} and Lemma \ref{lem: small order estimate }, we know
{\small\begin{equation*}
            \begin{aligned}
                \Big| \mathbb{E} \Big[\nabla_y \phi(Y^\mu_0)V^{\epsilon}_0\Big]\Big|^2 &\leq \mathbb{E} \Big[|\nabla_y \phi(Y^\mu_0)|^2\Big] \cdot \mathbb{E} \Big[|V^{\epsilon}_0|^2\Big]\leq 2K^2\mathbb{E} \Big[1+ |Y^\mu_0|^2\Big] \mathbb{E} \Big[\sup_{0\leq t \leq T}|V^{\epsilon}_t|^2\Big]\\
                &\leq 2K^2 \Big(1+ \mathbb{E}\Big[\sup_{0\leq t \leq T}|Y^\mu_t|^2\Big]\Big) \mathbb{E} \Big[\sup_{0\leq t \leq T}|V^{\epsilon}_t|^2\Big]= o(\epsilon^2)
            \end{aligned}
        \end{equation*}}
and
        {\small\begin{equation*}
        \begin{aligned}
        \left|\mathbb{E} \left[\int_0^T \nabla_y l (t)V_t^{\epsilon}d t\right]\right|^2
        & \leq \mathbb{E} \left[\int_0^T\left|\nabla_y l(t)\right|^2 d t \right]\cdot \mathbb{E}\left[ \int_0^T\left|V_t^{\epsilon}\right|^2 d t\right] \\
        & \leq 3 K^2 \mathbb{E} \left[\int_0^T\left(1+\left|Y^\mu_t\right|^2+\left|Z^\mu_t\right|^2\right) d t \right]\cdot \mathbb{E} \left[\int_0^T\left|V_t^{\epsilon}\right|^2 d t\right] \\
        & \leq C_{\mu, T, K} \mathbb{E} \left[\int_0^T\left|V_t^{\epsilon}\right|^2 d t\right]=o\left(\varepsilon^2\right) .
        \end{aligned}
        \end{equation*}}
Similarly, $\left|\mathbb{E} \Big[\int_0^T \nabla_z l(t)Z^{V^{\epsilon}}_t d t\Big]\right|^2=o\left(\varepsilon^2\right)$.
                   
        Hence
        \begin{equation*}
            \left\{\begin{array}{l}
        \mathbb{E} [\nabla_y \phi(Y^\mu_0)V^{\epsilon}_0]=o(\varepsilon), \\
        \mathbb{E} [\int_0^T \nabla_y l (t)V_t^{\epsilon}d t]=o(\varepsilon), \\
        \mathbb{E} [\int_0^T \nabla_z l(t)Z^{V^{\epsilon}}_t d t]=o(\varepsilon).
        \end{array}\right.
        \end{equation*}

        On the other hand, by Lemma \ref{lem:big order estimate} we have
        \begin{equation}\label{eq:diff01}
            \begin{aligned}
                & \Big| \mathbb{E}\Big[\int_0^T \left(\nabla_y \tilde{l}^{\epsilon}(t)-\nabla_y l(t)\right)(V^{\epsilon}_t + \epsilon V_t)dt\Big]\Big|^2 \\
                &  = \Big|\mathbb{E}\Big[\int_0^T \left(\nabla_y \tilde{l}^{\epsilon}(t)-\nabla_y l(t)\right)(Y_t^{\epsilon} - Y^\mu_t)dt\Big]\Big|^2\\
                &  \leq \mathbb{E}\Big[\int_0^T |\nabla_y \tilde{l}^{\epsilon}(t)-\nabla_y l(t)|^2dt\Big]  \cdot \mathbb{E}\Big[\int_0^T |Y_t^{\epsilon} - Y^\mu_t|^2 dt\Big]\\
                &  \leq C  \epsilon^2 \cdot \mathbb{E}\Big[\int_0^T |\nabla_y \tilde{l}^{\epsilon}(t)-\nabla_y l(t)|^2dt\Big] .
            \end{aligned}
        \end{equation}
Then we prove $\lim\limits_{\epsilon \rightarrow 0} \mathbb{E}\left[\int_0^T |\nabla_y \tilde{l}^{\epsilon}(t)-\nabla_y l(t)|^2dt\right] = 0$. For any $0 \leq \lambda \leq 1$, Set
\begin{equation*}\label{eq: dm01}
        \begin{aligned}
                  r_y(t, \epsilon; \lambda):= |\nabla_y l(t, Y^\mu_t + \lambda (Y^{\epsilon}_t - Y^\mu_t), Z^\mu_t + \lambda (Z^{\epsilon}_t - Z^\mu_t),  \mu_t) - \nabla_y l(t)|^2.
        \end{aligned}
        \end{equation*}
        Similar to the proof of Lemma \ref{lem: small order estimate }, we get from Lemma \ref{lem:big order estimate} that  
        there exists a subsequence of  $\{\epsilon_n\}$, still denoted by $\{\epsilon_n\}$, such that $(Y^{\epsilon_n}_t, Z^{\epsilon_n}_t) $ converges to $(Y^\mu_t, Z^\mu_t) $ a.e. a.s. By the continuity of $\nabla_y l$ with respect to $(y,z)$, we have for any $0 \leq \lambda \leq 1$,
        \begin{equation*}\label{eq:int01}
            \lim_{n \rightarrow \infty} r_y(t, \epsilon_n; \lambda) = 0\ \ {\rm a.e.} \ {\rm a.s.}
        \end{equation*}
Moreover, as $n$ large enough,
        \begin{equation*}
        \begin{aligned}
                 &\mathbb{E}\Big[\int_0^T r_y(t, \epsilon_n; \lambda)\Big]\\&\leq 24K^2\mathbb{E}\Big[\int_0^T\Big((1+|Y^\mu_t|^2 +|Z^\mu_t|^2 ) + 16K^2(|Y^{\epsilon_n}_t-Y^\mu_t|^2+ |Z^{\epsilon_n}_t-Z^\mu_t|^2 )\Big)dt\Big]\\
                  &\leq C\mathbb{E}\Big[\int_0^T(1+|Y^\mu_t|^2 +|Z^\mu_t|^2 )dt\Big]<\infty.
        \end{aligned}
        \end{equation*}
        
Notice
        {\small\begin{equation*}\label{eq:dm11}
            \begin{aligned}
            &\mathbb{E}\Big[\int_0^T|\nabla_y \tilde{l}^{\epsilon_n}(t)-\nabla_y l(t)|^2 dt\Big] \\
            &= \mathbb{E}\Big[\int_0^T\Big|\int_0^1 \bigg(\nabla_y l(t, Y^\mu_t + \lambda (Y^{\epsilon_n}_t - Y^\mu_t), Z^\mu_t + \lambda (Z^{\epsilon_n}_t - Z^\mu_t), \mu_t)- \nabla_y l(t) \bigg) d \lambda \Big|^2 dt\Big]\\
            &\leq \mathbb{E}\Big[\int_0^T\int_0^1 r_y(t, \epsilon_n; \lambda) d \lambda\Big].
        \end{aligned}
        \end{equation*}}
By the dominated convergence theorem again, we have
        
        \begin{equation*}
            \lim_{n \rightarrow \infty} \mathbb{E}\Big[\int_0^T |\nabla_y \tilde{l}^{\epsilon_n}(t)-\nabla_y l(t)|^2dt\Big]  = 0.
        \end{equation*}
The arbitrariness of $\left\{\varepsilon_n\right\}$ and Heine theorem leads to
        \begin{equation*}
            \lim_{\epsilon \rightarrow 0} \mathbb{E}\Big[\int_0^T |\nabla_y \tilde{l}^{\epsilon}(t)-\nabla_y l(t)|^2dt\Big]  = 0.
        \end{equation*}
Back to (\ref{eq:diff01}), the above deductions imply
        \begin{equation*}
        \begin{aligned}
            \mathbb{E}\Big[\int_0^T \left(\nabla_y \tilde{l}^{\epsilon}(t)-\nabla_y l(t) \right)(V^{\epsilon}_t + \epsilon V_t)dt\Big] = o(\epsilon).
        \end{aligned}
        \end{equation*}

By similar deductions, we also have
        \begin{equation}\label{eq: estimate1}
        \left\{\begin{aligned}
            &\mathbb{E}\Big[\int_0^T \left(\nabla_z \tilde{l}^{\epsilon}(t)-\nabla_z l(t)\right)(Z^{V^{\epsilon}}_t + \epsilon Z^V_t)dt\Big] = o(\epsilon),\\
            &\mathbb{E} \Big[ \int_0^1  \left(\nabla_y \phi(Y^\mu_0 + \lambda \epsilon (V^{\epsilon}_0 + V_0)) - \nabla_y \phi(Y^\mu_0)\right)(V^{\epsilon}_0 + \epsilon V_0)d \lambda\Big] = o(\epsilon) .
        \end{aligned}\right.
        \end{equation}

Then we deal with two terms involving $\frac{\delta l}{\delta m}$. For the first one, it turns out that 
        \begin{equation}\label{eq: estimate2}
            \begin{aligned}
            &\Big|\mathbb{E} \Big[ \int_0^T \int_0^1 \int \Big(\frac{\delta l}{\delta m}(t,Y^{\epsilon}_t, Z^{\epsilon}_t, (1-\lambda)\mu_t + \lambda \mu^{\epsilon}_t)(a) \\
            & \quad \quad \quad \quad \quad \quad \ \ -  \frac{\delta l}{\delta m}(t,Y^{\epsilon}_t, Z^{\epsilon}_t,  \mu_t )(a)\Big)\epsilon(\pi_t(a)-\mu_t(a)) da d \lambda dt\Big] \Big|^2\\
            & \leq \mathbb{E} \Big[ \int_0^T \Big| \int_0^1 \int_0^1 \lambda \int \int \frac{\delta^2 l}{\delta m^2}(t,Y^{\epsilon}_t, Z^{\epsilon}_t, \mu_t^{\lambda,\lambda^{\prime}})(a,a^{\prime})  \epsilon(\pi_t(a^{\prime})-\mu_t(a^{\prime})) d a^{\prime}\\
            &\quad \quad \quad \quad \quad \quad \quad \quad \quad \quad \quad \cdot \epsilon(\pi_t(a)-\mu_t(a)) da d \lambda d \lambda^{\prime}\Big|^2 dt\Big]\\
            &\leq \mathbb{E}  \Big[\int_0^T \Big| \int_0^1 \int_0^1 \lambda  K\int (\pi_t(a^{\prime})+\mu_t(a^{\prime})) d a^{\prime} \int(\pi_t(a)+\mu_t(a)) da d \lambda d \lambda^{\prime}\Big|^2 dt \Big] \epsilon^2\\
            & = C_{K,T}\epsilon^2 = o(\epsilon).
            \end{aligned}
        \end{equation}
For the other term, since $\frac{\delta l}{\delta m}$ is uniformly Lipschitz in $(y,z)$, 
by Lemma \ref{lem:big order estimate} we have
       \begin{equation}\label{eq: estimate3}
            \begin{aligned}
            &\Big|\mathbb{E} \Big[ \int_0^T \int_0^1 \int (\frac{\delta l}{\delta m}(t,Y^{\epsilon}_t, Z^{\epsilon}_t, \mu_t)(a) -  \frac{\delta l}{\delta m}(t,Y^\mu_t, Z^\mu_t, \mu_t )(a))\epsilon(\pi_t(a)-\mu_t(a)) da d \lambda dt\Big] \Big|^2\\
            &\leq \Big|\mathbb{E} \Big[ \int_0^T \int_0^1 \int K(|Y^{\epsilon}_t - Y^\mu_t|+ |Z^{\epsilon}_t-Z^\mu_t| )\epsilon(\pi_t(a)+\mu_t(a)) da d \lambda dt\Big] \Big|^2\\
            &\leq C_K \mathbb{E}\Big[\int_0^T (|Y^{\epsilon}_t - Y^\mu_t|^2+ |Z^{\epsilon}_t-Z^\mu_t|^2 ) dt\Big]\epsilon^2  = o(\epsilon).
            \end{aligned}
        \end{equation}

Therefore, Proposition \ref{proposition: variation ineq1} follows from  (\ref{eq:diff func}) and (\ref{eq: estimate1})--(\ref{eq: estimate3}).
    \end{proof}

Now we are ready to present and prove the variation equality of cost functional.
  \begin{proposition}\label{proposition: variation ineq2}
        Under Assumption \ref{H: bsde},
       {\small \begin{equation*}
        \begin{aligned}
            \mathbb{E}\Big[ \nabla_y \phi (Y_0)V_0 +  \int_0^T \Big( & \nabla_y l^{}(t)V_t + \nabla_z l^{}(t)Z^V_t+ \int\frac{\delta l}{\delta m}(t, Y^\mu_t, Z^\mu_t, \mu_t)(a)(\pi_t(a) - \mu_t(a))da  \\
             & + \frac{\sigma^2}{2} \int \left((\ln \mu_t(a) + U(a))(\pi_t(a) - \mu_t(a))\right)da\Big)dt\Big] \geq 0.\\
        \end{aligned}
        \end{equation*} }
        where $\mu$ is the optimal control for (\textbf{P1}).
    \end{proposition}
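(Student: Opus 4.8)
The plan is to exploit the optimality of $\mu$ together with the convexity of $\mathcal{A}$, and then let $\epsilon\to0^+$ in the difference quotient of the regularized cost. Since $\mathcal{A}$ is convex, $\mu^\epsilon:=\mu+\epsilon(\pi-\mu)\in\mathcal{A}$ for every $\epsilon\in(0,1)$, and since $\mu$ is optimal for (\textbf{P1}) one has $J^\sigma(\mu^\epsilon)-J^\sigma(\mu)\geq0$. Splitting $J^\sigma=J^0+\frac{\sigma^2}{2}\mathbb{E}\big[\int_0^T Ent(\cdot\mid e^{-U})\,dt\big]$ and dividing by $\epsilon>0$, this reads
\begin{equation*}
A^\epsilon+\frac{\sigma^2}{2}\,C^\epsilon\ \geq\ 0,\qquad A^\epsilon:=\frac{J^0(\mu^\epsilon)-J^0(\mu)}{\epsilon},\quad C^\epsilon:=\frac{1}{\epsilon}\,\mathbb{E}\Big[\int_0^T\big(Ent(\mu_t^\epsilon\mid e^{-U})-Ent(\mu_t\mid e^{-U})\big)\,dt\Big],
\end{equation*}
for all $\epsilon\in(0,1)$. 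I would then take $\limsup_{\epsilon\to0^+}$ on both sides, identify $\lim A^\epsilon$ with the first group of terms in the assertion, and bound $\limsup C^\epsilon$ from above by the entropy term.

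The quantity $A^\epsilon$ is exactly the object computed in Proposition~\ref{proposition: variation ineq1}, so $\lim_{\epsilon\to0^+}A^\epsilon=A$ with
\begin{equation*}
A:=\mathbb{E}\Big[\nabla_y\phi(Y_0)V_0+\int_0^T\Big(\nabla_y l(t)V_t+\nabla_z l(t)Z^V_t+\int\frac{\delta l}{\delta m}(t,Y^\mu_t,Z^\mu_t,\mu_t)(a)(\pi_t(a)-\mu_t(a))\,da\Big)\,dt\Big].
\end{equation*}
For the entropy part I would apply Lemma~\ref{lem: semi continuity of entropy 01} with $\gamma_t=e^{-U}$, so that $\ln\gamma_t(a)=-U(a)$; then part ii) gives, $\mathbb{P}$-a.s.,
\begin{equation*}
\limsup_{\epsilon\to0^+}\frac{1}{\epsilon}\int_0^T\big(Ent(\mu_t^\epsilon\mid e^{-U})-Ent(\mu_t\mid e^{-U})\big)\,dt\ \leq\ \int_0^T\int(\ln\mu_t(a)+U(a))(\pi_t(a)-\mu_t(a))\,da\,dt .
\end{equation*}

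The hard part will be to pass this pathwise $\limsup$ through the expectation, i.e.\ to show $\limsup_{\epsilon\to0^+}C^\epsilon\leq B$ with $B:=\mathbb{E}\big[\int_0^T\int(\ln\mu_t(a)+U(a))(\pi_t(a)-\mu_t(a))\,da\,dt\big]$. I would do this via the reverse Fatou lemma: by convexity of $m\mapsto Ent(m\mid e^{-U})$ (the Remark after Definition~\ref{def:admissible control1}) and the identity $\mu_t^\epsilon=(1-\frac{\epsilon}{\epsilon_0})\mu_t+\frac{\epsilon}{\epsilon_0}\mu_t^{\epsilon_0}$, the map $\epsilon\mapsto\epsilon^{-1}\big(Ent(\mu_t^\epsilon\mid e^{-U})-Ent(\mu_t\mid e^{-U})\big)$ is non-decreasing for each fixed $(t,\omega)$; hence, fixing $\epsilon_0\in(0,1)$, for all $\epsilon\in(0,\epsilon_0)$ the random variable $\epsilon^{-1}\int_0^T(Ent(\mu_t^\epsilon\mid e^{-U})-Ent(\mu_t\mid e^{-U}))\,dt$ is dominated from above by the fixed, integrable random variable $\epsilon_0^{-1}\int_0^T(Ent(\mu_t^{\epsilon_0}\mid e^{-U})-Ent(\mu_t\mid e^{-U}))\,dt$ (integrable since $Ent(\cdot\mid e^{-U})\geq0$ and $\mu,\mu^{\epsilon_0}\in\mathcal{A}$ have finite expected relative entropy). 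Reverse Fatou together with the pathwise bound above then gives $\limsup_{\epsilon\to0^+}C^\epsilon\leq B$. Along the way one should record, as in \cite{siska-2020}, that the moment and entropy constraints defining $\mathcal{A}$ make the integrands of $B$ and of $\int\frac{\delta l}{\delta m}(t,Y^\mu_t,Z^\mu_t,\mu_t)(a)(\pi_t(a)-\mu_t(a))\,da$ absolutely integrable, so that $A$ and $B$ are finite.

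Finally, from $A^\epsilon+\frac{\sigma^2}{2}C^\epsilon\geq0$ one has $\frac{\sigma^2}{2}C^\epsilon\geq-A^\epsilon$; taking $\limsup_{\epsilon\to0^+}$ and using $\sigma^2>0$ and $A^\epsilon\to A$ yields
\begin{equation*}
\frac{\sigma^2}{2}B\ \geq\ \frac{\sigma^2}{2}\limsup_{\epsilon\to0^+}C^\epsilon\ \geq\ -\lim_{\epsilon\to0^+}A^\epsilon\ =\ -A ,
\end{equation*}
that is, $A+\frac{\sigma^2}{2}B\geq0$. Since $A+\frac{\sigma^2}{2}B$ is precisely the left-hand side of the claimed inequality (after distributing the expectation), this would complete the argument.
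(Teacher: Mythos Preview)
Your proposal is correct and follows essentially the same route as the paper: use optimality of $\mu$ to get $J^\sigma(\mu^\epsilon)-J^\sigma(\mu)\ge 0$, split into the $J^0$ part and the entropy part, invoke Proposition~\ref{proposition: variation ineq1} for the former and Lemma~\ref{lem: semi continuity of entropy 01}(ii) for the latter, and take $\limsup_{\epsilon\to0^+}$. The paper's proof is in fact just these three lines; you are being more careful than the authors by explicitly justifying the exchange of $\limsup$ and expectation via convexity/monotonicity and reverse Fatou, a point the paper silently passes over.
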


    \begin{proof}
        From the optimality of $\mu$, based on Lemma \ref{lem: semi continuity of entropy 01} and Proposition \ref{proposition: variation ineq1} we have
        \begin{align*}\label{eq:J sigma difference for variation ineq}
             0 & \leq \limsup_{\epsilon \rightarrow 0^+}\frac{J^\sigma(\mu^{\epsilon}) -J^\sigma(\mu)}{\epsilon} \notag \\
             &= \limsup_{\epsilon \rightarrow 0^+}\left(\frac{J^0(\mu^{\epsilon}) -J^0(\mu)}{\epsilon} + \frac{1}{\varepsilon} \mathbb{E}\left[\int_0^T\left(Ent\left(\mu_t^e \mid U\right)-Ent\left(\mu_t \mid U\right)\right) d t\right]\right) \notag \\
             & \leq \mathbb{E}\Big[ \nabla_y \phi (Y_0)V_0 +  \int_0^T \Big( \nabla_y l(t)V_t + \nabla_z l(t)Z^V_t \notag \\
             &  \quad \quad\quad \quad\quad \quad\quad \quad\quad \quad\quad \ \ + \int\frac{\delta l}{\delta m}(t, Y^\mu_t, Z^\mu_t, \mu_t)(a)(\pi_t(a) - \mu_t(a))da  \notag\\
             &  \quad \quad\quad \quad\quad \quad\quad \quad\quad \quad\quad \ \  + \frac{\sigma^2}{2} \int \left((\ln \mu_t(a) + U(a))(\pi_t(a) - \mu_t(a))\right)da\Big)dt\Big].
        \end{align*}
    \end{proof}


    Now we use duality technique to derive the local necessary condition for optimal control. To begin with, we introduce the adjoint equation of variation equation (\ref{eq:variation eq})
    \begin{equation}\label{eq:duality equation}
        \left\{\begin{aligned}
            &d P^\mu_t = -\left(-\nabla_y f^{\top}(t)P^\mu_t + \nabla_y l(t)\right)dt - \left(-\nabla_z f^{\top}(t)P^\mu_t + \nabla_z l(t)\right)d W_t \\
            & P^\mu_0 = -\nabla_y \phi(Y^\mu_0).
        \end{aligned}\right.
    \end{equation}
Actually, SDE (\ref{eq:duality equation}) is a linear equation whose coefficients $\nabla_y f^{\top}(t)$ and $\nabla_z f^{\top}(t)$ are duality operators of $\nabla_y f(t)$ and $\nabla_z f(t)$ satisfying Lipschitz conditions, so it is clear that SDE (\ref{eq:duality equation}) has a unique solution in $S^2_{\mathscr{F}}(0,T;\mathbb{R}^{n})$. Then introduce the Hamiltonians $H^0: \Omega \times [0,T] \times \mathbb{R}^n \times \mathbb{R}^{n \times m} \times \mathbb{R}^n \times \mathcal{P}(\mathbb{R}^p) \rightarrow \mathbb{R}$ and $H^{\sigma}: \Omega \times [0,T] \times \mathbb{R}^n \times \mathbb{R}^{n \times m} \times \mathbb{R}^n \times \mathcal{P}(\mathbb{R}^p) \times \mathcal{P}(\mathbb{R}^p) \rightarrow \mathbb{R}$ as follows
    \begin{equation*}\label{eq:hamiltonian}
        \begin{array}{l}
        H^0(t,y,z,p,m):= -pf(t,y,z,m) + l(t,y,z,m), \\
        H^{\sigma}(t,y,z,p,m,m^{\prime}):= H^0(t,y,z,p,m) + \frac{\sigma^2}{2}Ent(m \mid m^{\prime}).
        \end{array}
    \end{equation*}
It can be seen from Assumption \ref{H: bsde} that
    $H^0(t,y,z,p,m)$ and $H^{\sigma}(t,y,z,p,m,m^{\prime})$ are continuous with respect to $(y,z,p)$ and differentiable with respect to $(y,z)$. By Definition \ref{def: flat derivative} $H^0(t,y,z,p,m)$ has flat derivative
    \begin{equation*}\label{eq:flat deriviative of H0}
    \frac{\delta H^{0}(t,y,z,p,m) }{\delta m}:= \frac{\delta H^{0}}{\delta m}(t,y,z,p,m)(a).
    \end{equation*}
   Hence the adjoint equation (\ref{eq:duality equation}) is equivalent to
        \begin{equation*}\label{eq:duality equation2}
        \left\{\begin{aligned}
            &d P^\mu_t = -\nabla_y H^{0} (t, Y^\mu_t, Z^\mu_t, P^\mu_t, \mu_t)dt -\nabla_z H^{0} (t, Y^\mu_t, Z^\mu_t, P^\mu_t, \mu_t)d W_t \\
            & P_0 = -\nabla_y \phi(Y_0).
        \end{aligned}\right.
    \end{equation*}
    Although the term of entropy is lower-semi continuous and does not have flat derivative, we still use the following notation for convenience
    \begin{equation*}\label{eq:flat deriviative of Hsigma}
    \frac{\delta H^{\sigma}}{\delta m}(t,y,z,p,m)(a) := \frac{\delta H^{0}}{\delta m}(t,y,z,p,m)(a) + \frac{\sigma^2}{2}(U(a) + \ln m(a) + 1).
    \end{equation*}

   Based on (\ref{eq:variation eq}) and adjoint equation (\ref{eq:duality equation}), the variation inequality of cost functional in Proposition \ref{proposition: variation ineq1} can be written in a new form.
     \begin{proposition}\label{proposition: variation ineq3}
        Under Assumption \ref{H: bsde},
        \begin{align*}
             \lim_{\epsilon \rightarrow 0^+}\frac{J^0(\mu^{\epsilon}) -J^0(\mu)}{\epsilon}= \mathbb{E}\Big[ \int_0^T \int \Big(\frac{\delta H^{0}}{\delta m}(t,Y^\mu_t, Z^\mu_t, P^\mu_t,\mu_t)(a) \Big) (\pi_t(a) - \mu_t(a)) da dt\Big].
        \end{align*}
    \end{proposition}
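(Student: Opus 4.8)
The plan is to convert the expression for the G\^{a}teaux derivative supplied by Proposition~\ref{proposition: variation ineq1} into one featuring the adjoint process $P^\mu$, by the standard adjoint--duality computation. I would apply It\^o's product rule to the duality pairing $\langle P^\mu_t, V_t\rangle$, where $P^\mu$ solves the adjoint SDE~\eqref{eq:duality equation} and $(V,Z^V)$ solves the variation BSDE~\eqref{eq:variation eq}. Expanding $d\langle P^\mu_t, V_t\rangle$ into the three contributions $\langle V_t, dP^\mu_t\rangle$, $\langle P^\mu_t, dV_t\rangle$ and the quadratic covariation of the martingale parts, and using that $\nabla_y f^{\top}(t)$ and $\nabla_z f^{\top}(t)$ are the adjoint operators of $\nabla_y f(t)$ and $\nabla_z f(t)$, all terms involving $\nabla_y f$ and $\nabla_z f$ cancel pairwise. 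What remains in the $dt$--part is
\[
d\langle P^\mu_t, V_t\rangle = \Big(-\,\nabla_y l(t) V_t -\, \nabla_z l(t) Z^V_t -\, \Big\langle P^\mu_t,\ \int \frac{\delta f}{\delta m}(t, Y^\mu_t, Z^\mu_t, \mu_t)(a)\big(\pi_t(a)-\mu_t(a)\big)\, da \Big\rangle\Big)\, dt + dM_t,
\]
where $M$ is a local martingale (the overall sign being that forced by the conventions of~\eqref{equation:bsde1}, \eqref{eq:variation eq} and~\eqref{eq:duality equation}).

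The second step is to integrate this identity over $[0,T]$, insert the boundary data $V_T = 0$ and $P^\mu_0 = -\nabla_y \phi(Y^\mu_0)$, and take expectations. The expectation of $\int_0^T dM_t$ vanishes: the diffusion coefficient of $\langle P^\mu, V\rangle$ is a finite sum of terms of the form $P^\mu_t\cdot Z^V_t$ and $(\nabla_z f^{\top}(t)P^\mu_t - \nabla_z l(t))\cdot V_t$, and each is integrable by Cauchy--Schwarz, using $P^\mu \in S^2_{\mathscr{F}}(0,T;\mathbb{R}^n)$, the a priori estimate~\eqref{eq:variation eq's estimate} for $(V,Z^V)$, the uniform boundedness of $\nabla_y f, \nabla_z f$ and the linear growth of $\nabla_z l$ from Assumption~\ref{H: bsde}. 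Moreover the linear growth of $\frac{\delta f}{\delta m}$ in $a$ together with the finite second moment built into $\mathcal{A}$ makes $\int \frac{\delta f}{\delta m}(t, Y^\mu_t, Z^\mu_t, \mu_t)(a)(\pi_t(a)-\mu_t(a))\, da$ square-integrable in $(\omega,t)$, which is what legitimizes the whole integration. This yields the duality identity
\[
\mathbb{E}\Big[\nabla_y \phi(Y^\mu_0) V_0 + \int_0^T \big(\nabla_y l(t) V_t + \nabla_z l(t) Z^V_t\big)\, dt\Big] = -\,\mathbb{E}\Big[\int_0^T \int \Big\langle P^\mu_t,\ \frac{\delta f}{\delta m}(t, Y^\mu_t, Z^\mu_t, \mu_t)(a)\Big\rangle \big(\pi_t(a)-\mu_t(a)\big)\, da\, dt\Big].
\]

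The last step is to substitute this identity into the right-hand side of Proposition~\ref{proposition: variation ineq1}: the three terms $\nabla_y\phi(Y^\mu_0)V_0$, $\nabla_y l(t)V_t$ and $\nabla_z l(t)Z^V_t$ get replaced by the single $P^\mu$--term above, while the $\frac{\delta l}{\delta m}$--term is untouched. Because $H^0(t,y,z,p,m) = -pf(t,y,z,m) + l(t,y,z,m)$ depends on $m$ only affinely through $f$ and $l$, both of which admit flat derivatives under Assumption~\ref{H: bsde}, its flat derivative is $\frac{\delta H^0}{\delta m}(t,y,z,p,m)(a) = -\,p\,\frac{\delta f}{\delta m}(t,y,z,m)(a) + \frac{\delta l}{\delta m}(t,y,z,m)(a)$; collecting the two surviving terms against the common measure $(\pi_t(a)-\mu_t(a))\, da\, dt$ yields precisely the claimed formula. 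The only genuinely technical point in this scheme is the justification that $\mathbb{E}[\int_0^T dM_t] = 0$, i.e.\ the integrability of the diffusion coefficient of $\langle P^\mu, V\rangle$; this is more a matter of careful bookkeeping than a real obstacle, resting on the a priori estimate~\eqref{eq:variation eq's estimate} and the boundedness and growth hypotheses of Assumption~\ref{H: bsde}, after which the remainder is the routine adjoint manipulation, whose only subtlety is keeping the signs consistent with the fixed conventions.
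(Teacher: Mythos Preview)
Your proposal is correct and follows essentially the same route as the paper: apply It\^o's formula to $P^\mu_t V_t$, use the boundary data $V_T=0$ and $P^\mu_0=-\nabla_y\phi(Y^\mu_0)$ together with a martingale/stopping-time argument to obtain the duality identity~\eqref{eq:expectation}, and then substitute into Proposition~\ref{proposition: variation ineq1}. Your write-up is, if anything, more explicit about why the stochastic integral has zero expectation, whereas the paper simply invokes ``standard stopping time arguments''.
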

    \begin{proof}
        Applying It\^{o} formula to $P^\mu_tV_t$, where 
        $V$ is the solution to variation equation (\ref{eq:variation eq}), we have
            {\small\begin{align}\label{eq: PV}
                & d P_t^{\mu}V_t \notag \\
                & = V_tdP^\mu_t + P^\mu_td V_t + d P^\mu_td V_t \notag \\
                & = \Bigg(-V_t\left(-\nabla_y f(t)P^\mu_t + \nabla_y l(t) \right)- P^\mu_t \left(\nabla_y f(t)  V_t + \nabla_z f(t) Z^{V}_t\right) \notag \\
                & \quad \ \ \ \ - Z_t^V \left(-\nabla_z f(t)P^\mu_t + \nabla_z l(t) \right)- P^\mu_t\int \frac{\delta f}{\delta m}(t, Y^\mu_t, Z^\mu_t,  \mu_t)(a)(\pi_t(a) - \mu_t(a))da\Bigg)dt \notag \\
                &  \quad \ \ + \left(P_t^{\mu}Z^{V}_t -V_t(-\nabla_z f(t)P^\mu_t + \nabla_z l(t))\right) d W_t. 
            \end{align}}
Taking expectation on both sides of (\ref{eq: PV}) and using standard stopping time arguments, we obtain
    \begin{equation}\label{eq:expectation}
        \begin{aligned}
&\mathbb{E}\Big[\nabla_y\phi(Y^\mu_0)V_0 + \int_0^T\Big(\nabla_y l(t)V_t + \nabla_z l(t)Z^V_t\\
            & \quad \quad \quad \quad \quad \quad \quad \quad \quad \quad \ + P^\mu_t\int \frac{\delta f}{\delta m}(t, Y^\mu_t, Z^\mu_t, \mu_t)(a)(\pi_t(a) - \mu_t(a))da\Big)dt\Big]=0.
        \end{aligned}
    \end{equation}
Then, based on Proposition \ref{proposition: variation ineq1} and (\ref{eq:expectation}), we have
        \begin{align*}\label{cz1}
             &\lim_{\epsilon \rightarrow 0^+}\frac{J^0(\mu^{\epsilon}) -J^0(\mu)}{\epsilon}\\
             &= \mathbb{E}\Big[ \int_0^T \Big(\nabla_y l(t)V_t + \nabla_z l(t)Z^V_t\\
             & \quad \quad \quad \quad \quad+ \int\frac{\delta l}{\delta m}(t, Y^\mu_t, Z^\mu_t, \mu_t)(a)(\pi_t(a) - \mu_t(a))da \Big)dt + \nabla_y \phi (Y^\mu_0)V_0 \Big] \\
             & = \mathbb{E}\Big[\int_0^T \Big( -P_t^{\mu}\int \frac{\delta f}{\delta m}(t, Y^\mu_t, Z^\mu_t, \mu_t)(a)(\pi_t(a) - \mu_t(a))da \\
             & \quad \quad \quad \quad \quad + \int\frac{\delta l}{\delta m}(t, Y^\mu_t, Z^\mu_t, \mu_t)(a)(\pi_t(a) - \mu_t(a))da \Big)dt\Big]\\
             & = \mathbb{E}\Big[\int_0^T\int \frac{\delta H^{0}}{\delta m}(t,Y^\mu_t,Z^\mu_t, P^\mu_t,\mu_t)(a) \cdot (\pi_t(a) - \mu_t(a)) da dt\Big].
        \end{align*}
    \end{proof}

Now we are ready to prove the maximum principle for Problem (\textbf{P1}).
    \begin{theorem}\label{cz4}
        Under Assumption \ref{H: bsde}, if $\mu \in \mathcal{A}$ is an optimal control of Problem (\textbf{P1}) with the corresponding optimal state process $(Y^\mu, Z^\mu) $, and $P^\mu$ is the solution to adjoint equation (\ref{eq:duality equation}), then for any $t\in[0,T]$, $\pi \in \mathcal{A}$,
        \begin{equation}\label{eq: general smp}
           \int \Big(\frac{\delta H^{0}}{\delta m}(t,Y^\mu_t, Z^\mu_t, P^\mu_t,\mu_t)(a) + \frac{\sigma^2}{2}(\ln \mu_t(a) +U(a)) \Big) (\pi_t(a) - \mu_t(a)) da \geq 0  \quad {\rm a.s.}
        \end{equation}
    \end{theorem}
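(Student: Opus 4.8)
The plan is to obtain (\ref{eq: general smp}) from an integrated variational inequality by a localization argument. First I would combine the three preceding propositions: Propositions \ref{proposition: variation ineq1} and \ref{proposition: variation ineq3} give two representations of the same G\^ateaux derivative $\lim_{\epsilon\to0^{+}}\frac{J^{0}(\mu^{\epsilon})-J^{0}(\mu)}{\epsilon}$, so substituting the Hamiltonian form into the inequality of Proposition \ref{proposition: variation ineq2} yields that optimality of $\mu$ is equivalent to
\begin{equation*}
\mathbb{E}\left[\int_{0}^{T}\Phi^{\pi}(t)\,dt\right]\ge 0\qquad\text{for all }\pi\in\mathcal{A},
\end{equation*}
where the (adapted, jointly measurable) process $\Phi^{\pi}$ is given by
\begin{equation*}
\Phi^{\pi}(t)=\int\left(\frac{\delta H^{0}}{\delta m}\bigl(t,Y^{\mu}_{t},Z^{\mu}_{t},P^{\mu}_{t},\mu_{t}\bigr)(a)+\frac{\sigma^{2}}{2}\bigl(\ln\mu_{t}(a)+U(a)\bigr)\right)\bigl(\pi_{t}(a)-\mu_{t}(a)\bigr)\,da .
\end{equation*}

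The second and main step is to localize in $(\omega,t)$. For a progressively measurable set $A\subseteq\Omega\times[0,T]$, define the glued control $\pi^{A}:=\mathbf{1}_{A}\,\pi+\mathbf{1}_{A^{c}}\,\mu$, i.e.\ $\pi^{A}_{t}(\omega)=\pi_{t}(\omega)$ if $(\omega,t)\in A$ and $=\mu_{t}(\omega)$ otherwise. Then $\pi^{A}\in\mathcal{A}$: the map $t\mapsto\pi^{A}_{t}$ is $\mathscr{F}_{t}$-measurable, and because relative entropy is nonnegative and $a\mapsto|a|^{2}$ is nonnegative, the entropy and second-moment budgets split as
\begin{equation*}
\mathbb{E}\!\left[\int_{0}^{T}\! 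Ent(\pi^{A}_{t}\mid e^{-U})\,dt\right]\le\mathbb{E}\!\left[\int_{0}^{T}\! Ent(\pi_{t}\mid e^{-U})\,dt\right]+\mathbb{E}\!\left[\int_{0}^{T}\! Ent(\mu_{t}\mid e^{-U})\,dt\right]<\infty,
\end{equation*}
and similarly for $\mathbb{E}[\int_{0}^{T}\!\int|a|^{2}\pi^{A}_{t}(a)\,da\,dt]$. Since $\pi^{A}_{t}(a)-\mu_{t}(a)=\mathbf{1}_{A}(\omega,t)\bigl(\pi_{t}(a)-\mu_{t}(a)\bigr)$, applying the integrated inequality to $\pi^{A}$ gives $\mathbb{E}[\int_{0}^{T}\mathbf{1}_{A}(\omega,t)\,\Phi^{\pi}(t)\,dt]\ge 0$. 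Choosing $A=\{(\omega,t):\Phi^{\pi}(t)<0\}$ (a progressively measurable set, after passing to a progressively measurable modification of $\Phi^{\pi}$) forces $\Phi^{\pi}(t)\ge 0$ for $dt\,d\mathbb{P}$-a.e.\ $(t,\omega)$, which is exactly (\ref{eq: general smp}) for the given $\pi$, valid for a.e.\ $t\in[0,T]$ and $\mathbb{P}$-a.s.

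If one further wants a single exceptional null set outside which (\ref{eq: general smp}) holds for every $\pi\in\mathcal{A}$ at once, I would fix a countable family $\{\pi^{(k)}\}\subseteq\mathcal{A}$ dense for a topology in which $\pi\mapsto\Phi^{\pi}(t)$ is continuous, establish the inequality for each $\pi^{(k)}$, intersect the countably many null sets, and extend to arbitrary $\pi$ using that $\Phi^{\pi}(t)$ is affine in $\pi$ together with the bound $|\frac{\delta H^{0}}{\delta m}|\le K(1+|a|^{2})$ from Assumption \ref{H: bsde} and the finiteness of the entropy integrals, which control $\int|\ln\mu_{t}(a)+U(a)|\,(\pi_{t}(a)+\mu_{t}(a))\,da$. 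I expect this to be the only delicate point, since $\ln\mu_{t}$ is unbounded and the density argument must be run in the $\mathcal{P}_{2}$-setting with the entropy budget, in the spirit of Lemma \ref{lem: semi continuity of entropy 01}; by contrast, the admissibility check for $\pi^{A}$ and the localization itself are routine once the preceding lemmas and propositions are in hand.
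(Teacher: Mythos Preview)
Your proposal is correct and follows essentially the same route as the paper: combine Propositions~\ref{proposition: variation ineq2} and~\ref{proposition: variation ineq3} to obtain the integrated inequality $\mathbb{E}\big[\int_0^T\Phi^{\pi}(t)\,dt\big]\ge 0$, then localize by gluing $\pi$ and $\mu$ over a bad set (the paper phrases this as a contradiction on the set $S_\epsilon=\{\Phi^{\tilde\pi}\le-\epsilon\}$, while you take $A=\{\Phi^{\pi}<0\}$ directly, which is equivalent). Your final paragraph on obtaining a single null set valid for all $\pi\in\mathcal{A}$ via a density argument goes beyond what the paper actually proves---the paper is content with the inequality holding for each fixed $\pi$ up to a $\pi$-dependent null set---so that part is unnecessary for the theorem as stated.
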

    \begin{proof}
     Based on Propositions \ref{proposition: variation ineq2} and \ref{proposition: variation ineq3}, we deduce from the optimality of $\mu$ that
        {\small\begin{equation*}
            \begin{aligned}
            &\mathbb{E}\Big[ \int_0^T \int\Big(\frac{\delta H^0}{\delta m}\left(t,Y^\mu_t,Z^\mu_t,P^\mu_t,\mu_t\right)(a)+\frac{\sigma^2}{2} \left(\ln \mu_t(a)+  U(a)\right)\Big) \left(\pi_t(a)-\mu_t(a)\right)d a d t \Big] \geq 0.
            \end{aligned}
        \end{equation*}}

Assume that \eqref{eq: general smp} doesn't hold. This means that there is a $\tilde{\pi} \in \mathcal{A}$ and $S_{\epsilon} \in \mathscr{F} \otimes \mathcal{B}([0, T])$ with a strictly positive measure $\mathbb{P} \otimes \Lambda$, where $\Lambda$ is the Lebesgue measure on $\mathcal{B}([0, T])$ and
         \begin{align*}
         S_{\epsilon} = \bigg\{(\omega, t): \int &\Big(\frac{\delta H^{0}}{\delta m}(t,Y^\mu_t,Z^\mu_t,P^\mu_t,\mu_t)(a) + \frac{\sigma^2}{2}(\ln \mu_t(a) + U(a)) \Big)\\
         &\cdot(\tilde{\pi}_t(a) - \mu_t(a)) da \leq -\epsilon < 0\bigg\}.
         \end{align*}
        Define $\tilde{\mu}_t:=\tilde{\pi}_t \mathbb{I}_{S_{\epsilon}}+\mu_t \mathbb{I}_{{S_{\epsilon}^c}}$. We have
        {\small$$
        \begin{aligned}
        0 & \leq \mathbb{E} \Big[\int_0^T \int\Big(\frac{\delta H^0}{\delta m}\left(t,Y^\mu_t,Z^\mu_t,P^\mu_t,\mu_t\right)(a)+\frac{\sigma^2}{2}\left(\ln \mu_t(a)+ U(a)\right)\Big)\left(\tilde{\mu}_t(a)-\mu_t(a)\right)d a d t \Big] \\
        & =\mathbb{E} \Big[\int_0^T \mathbb{I}_{S_{\epsilon}} \int\Big(\frac{\delta H^0}{\delta m}\left(t,Y^\mu_t,Z^\mu_t,P^\mu_t,\mu_t\right)(a)+\frac{\sigma^2}{2}\left(\ln \mu_t(a)+U(a)\right)\Big)\\
        &\ \ \ \ \ \ \ \ \ \ \ \ \ \ \ \ \ \ \ \ \ \ \cdot\left(\tilde{\pi}_t(a)-\mu_t(a)\right)d a d t \Big] \\
        & \leq-\epsilon \mathbb{E}\int_0^T \mathbb{I}_{S_{\epsilon}}dt < 0,
        \end{aligned}
        $$}
        which leads to a contradiction. Then the proof follows.
    \end{proof}

\section{Further Discussions of Optimal Controls}

In this section we present a sufficient condition for the optimal control and give an implicit form of it. For the sufficient condition, the convex conditions for the coefficients are needed.
    \begin{ass}\label{ass: convex}
    For the coefficients in Problem (\textbf{P1}), $\phi(y)$ is convex in $y$ and $H^{\sigma}(t,y,z,p,m,m')$ 
    is convex in $(y,z,m)$. 
    \end{ass}

    With Assumption \ref{ass: convex} we prove the sufficient condition for the optimal control.
    \begin{theorem}\label{thm:verification theorem1}
         Under Assumptions \ref{H: bsde} and \ref{ass: convex}, 
         the control $\mu \in \mathcal{A}$ is an optimal control of Problem (\textbf{P1}) if for any $t\in[0,T]$, $\pi \in \mathcal{A}$, it satisfies \eqref{eq: general smp} with $(Y^\mu, Z^\mu)$ and $P^\mu$ be the solutions to the corresponding BSDE (\ref{equation:bsde1}) and adjoint equation (\ref{eq:duality equation}), respectively.
    \end{theorem}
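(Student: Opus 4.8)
The plan is to run the standard convexity-based verification argument. Fix an arbitrary $\pi\in\mathcal{A}$ and let $(Y^\pi,Z^\pi)$ be the corresponding solution of \eqref{equation:bsde1}; the goal is to show $J^\sigma(\pi)-J^\sigma(\mu)\ge 0$. First I would write
\[
J^\sigma(\pi)-J^\sigma(\mu)=\mathbb{E}\big[\phi(Y^\pi_0)-\phi(Y^\mu_0)\big]+\mathbb{E}\!\int_0^T\!\Big(\,l(t,Y^\pi_t,Z^\pi_t,\pi_t)-l(t,Y^\mu_t,Z^\mu_t,\mu_t)+\tfrac{\sigma^2}{2}\big(Ent(\pi_t\mid e^{-U})-Ent(\mu_t\mid e^{-U})\big)\Big)\,dt,
\]
and, using the identity $l+\tfrac{\sigma^2}{2}Ent(\cdot\mid e^{-U})=H^\sigma(t,\cdot,\cdot,P^\mu_t,\cdot,e^{-U})+P^\mu_t f$, rewrite the running term as
\[
\mathbb{E}\!\int_0^T\!\big[H^\sigma(t,Y^\pi_t,Z^\pi_t,P^\mu_t,\pi_t,e^{-U})-H^\sigma(t,Y^\mu_t,Z^\mu_t,P^\mu_t,\mu_t,e^{-U})\big]\,dt+\mathbb{E}\!\int_0^T\!P^\mu_t\big[f(t,Y^\pi_t,Z^\pi_t,\pi_t)-f(t,Y^\mu_t,Z^\mu_t,\mu_t)\big]\,dt.
\]

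Next I would invoke Assumption~\ref{ass: convex}. Convexity of $\phi$ gives $\phi(Y^\pi_0)-\phi(Y^\mu_0)\ge\nabla_y\phi(Y^\mu_0)(Y^\pi_0-Y^\mu_0)=-P^\mu_0(Y^\pi_0-Y^\mu_0)$. Joint convexity of $H^\sigma$ in $(y,z,m)$ (with $p=P^\mu_t$ fixed) gives the subgradient inequality
\[
H^\sigma(t,Y^\pi_t,Z^\pi_t,P^\mu_t,\pi_t,e^{-U})-H^\sigma(t,Y^\mu_t,Z^\mu_t,P^\mu_t,\mu_t,e^{-U})\ \ge\ \nabla_yH^0(t)(Y^\pi_t-Y^\mu_t)+\nabla_zH^0(t)(Z^\pi_t-Z^\mu_t)+\int\frac{\delta H^\sigma}{\delta m}(t,Y^\mu_t,Z^\mu_t,P^\mu_t,\mu_t)(a)\,(\pi_t(a)-\mu_t(a))\,da,
\]
where the $(y,z)$-parts of the derivative come from the smooth part $H^0$, and the $m$-directional derivative of $H^\sigma$ at $\mu_t$ toward $\pi_t$ decomposes into the genuine flat derivative $\frac{\delta H^0}{\delta m}$ of $H^0$ (Definition~\ref{def: flat derivative}, which exists under Assumption~\ref{H: bsde}) plus $\tfrac{\sigma^2}{2}$ times the right derivative of $\lambda\mapsto Ent(\mu_t+\lambda(\pi_t-\mu_t)\mid e^{-U})$ at $\lambda=0^+$; by Lemma~\ref{lem: semi continuity of entropy 01} the latter equals $\int(\ln\mu_t(a)+U(a))(\pi_t(a)-\mu_t(a))\,da$, so the bracketed term on the right is exactly $\frac{\delta H^\sigma}{\delta m}$ as defined in Section~3 (the additive constant $+1$ in that notation integrates against $\pi_t-\mu_t$ to zero).

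The last step is to eliminate the first-order terms. I would apply It\^o's formula to $P^\mu_t(Y^\pi_t-Y^\mu_t)$, using the adjoint dynamics in the form $dP^\mu_t=-\nabla_yH^0(t)\,dt-\nabla_zH^0(t)\,dW_t$ and the BSDE satisfied by $Y^\pi-Y^\mu$ (whose terminal value is $Y^\pi_T-Y^\mu_T=\xi-\xi=0$), together with a standard localization to discard the martingale part — legitimate since $P^\mu\in S^2_{\mathscr F}(0,T;\mathbb{R}^n)$, $Y^\pi-Y^\mu\in S^2_{\mathscr F}(0,T;\mathbb{R}^n)$ and $Z^\pi-Z^\mu\in L^2_{\mathscr F}(0,T;\mathbb{R}^{n\times m})$ by Assumption~\ref{H: bsde} and well-posedness of \eqref{equation:bsde1} and \eqref{eq:duality equation}. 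This produces an identity for $\mathbb{E}[P^\mu_0(Y^\pi_0-Y^\mu_0)]$ which, when added to the two convexity bounds above, cancels every term involving $Y^\pi-Y^\mu$, $Z^\pi-Z^\mu$ and $f(t,Y^\pi_t,Z^\pi_t,\pi_t)-f(t,Y^\mu_t,Z^\mu_t,\mu_t)$ — precisely because $\nabla_yH^0=-\nabla_yf^\top P^\mu+\nabla_yl$ and $\nabla_zH^0=-\nabla_zf^\top P^\mu+\nabla_zl$ — in the same way as in the proof of Proposition~\ref{proposition: variation ineq3}. What remains is
\[
J^\sigma(\pi)-J^\sigma(\mu)\ \ge\ \mathbb{E}\!\int_0^T\!\int\Big(\frac{\delta H^{0}}{\delta m}(t,Y^\mu_t,Z^\mu_t,P^\mu_t,\mu_t)(a)+\frac{\sigma^2}{2}\big(\ln\mu_t(a)+U(a)\big)\Big)(\pi_t(a)-\mu_t(a))\,da\,dt\ \ge\ 0,
\]
the last inequality being the assumed maximum-principle inequality \eqref{eq: general smp} integrated over $\Omega\times[0,T]$. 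Since $\pi\in\mathcal{A}$ is arbitrary, $\mu$ is optimal for (\textbf{P1}).

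I expect the main obstacle to be the rigorous handling of the entropy contribution. Because $m\mapsto Ent(m\mid e^{-U})$ is only lower semicontinuous and is not flat-differentiable in the sense of Definition~\ref{def: flat derivative}, one cannot literally apply a subgradient inequality to $H^\sigma$ in $m$ as to a smooth functional; instead one must justify directly that the one-sided directional derivative of the entropy along the segment $[\mu_t,\pi_t]$ equals $\int(\ln\mu_t(a)+U(a))(\pi_t(a)-\mu_t(a))\,da$ and that $\lambda^{-1}Ent(\mu_t+\lambda(\pi_t-\mu_t)\mid\mu_t)\to0$ as $\lambda\downarrow0$ — which is exactly what Lemma~\ref{lem: semi continuity of entropy 01} (applied over $[0,T]$) delivers, and which equivalently rests on the algebraic identity $Ent(\pi_t\mid e^{-U})-Ent(\mu_t\mid e^{-U})=Ent(\pi_t\mid\mu_t)+\int(\ln\mu_t(a)+U(a))(\pi_t(a)-\mu_t(a))\,da$ together with $Ent(\pi_t\mid\mu_t)\ge0$. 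A secondary technical point is verifying the integrability needed to take expectations in the It\^o step and to drop the stochastic integral.
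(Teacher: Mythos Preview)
Your proposal is correct and follows essentially the same route as the paper: split $J^\sigma(\pi)-J^\sigma(\mu)$ into the terminal and running pieces, use convexity of $\phi$ together with It\^o's formula applied to $P^\mu_t(Y^\pi_t-Y^\mu_t)$ to handle the terminal/first-order terms, use convexity of $H^\sigma$ in $(y,z,m)$ for the running piece, and finish with \eqref{eq: general smp}. Your treatment of the entropy contribution --- decomposing $H^\sigma$ into $H^0$ plus entropy and invoking Lemma~\ref{lem: semi continuity of entropy 01} (equivalently the identity $Ent(\pi_t\mid e^{-U})-Ent(\mu_t\mid e^{-U})=Ent(\pi_t\mid\mu_t)+\int(\ln\mu_t+U)(\pi_t-\mu_t)\,da\ge \int(\ln\mu_t+U)(\pi_t-\mu_t)\,da$) --- is in fact more explicit than the paper, which simply appeals to the convexity of $H^\sigma$ and the formal notation $\frac{\delta H^\sigma}{\delta m}$ without spelling out why the entropy term yields the claimed subgradient.
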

    \begin{proof}
      For $\mu\in\mathcal{A}$ satisfying (\ref{eq: general smp}), we have
      \begin{equation}\label{cz2}
            \begin{aligned}
                J^{\sigma}(\pi) - J^{\sigma}(\mu)= I_1 + I_2,
            \end{aligned}
        \end{equation}
 where  $
                I_1  = \mathbb{E}\Big[\phi(Y_0^{\pi}) - \phi(Y_0^{\mu})\Big]
        $
and
        {\small\begin{equation}\label{eq:i2}
            \begin{aligned}
                I_2  =  \mathbb{E}\Big[\int_0^T [&l(t, Y_t^{\pi}, Z_t^{\pi}, \pi_t) + \frac{\sigma^2}{2}Ent(\pi_t \mid e^{-U})- l(t, Y_t^{\mu}, Z_t^{\mu}, \mu_t) - \frac{\sigma^2}{2}Ent(\mu_t \mid e^{-U})]dt\Big].
            \end{aligned}
        \end{equation}}
For $I_1$, applying It\^{o} formula to $P_t^{\mu }(Y_t^{\pi} - Y_t^{\mu})$, we have
       \begin{equation*}
            \begin{aligned}
           &d P_t^{\mu}(Y_t^{\pi} - Y_t^{\mu})\\
            & = \Bigg(-(Y_t^{\pi} - Y_t^{\mu})(-\nabla_y f(t)P^{\mu}_t + \nabla_y l(t) )- (Z_t^{\pi}-Z_t^\mu)(-\nabla_z f(t)P_t^{\mu} + \nabla_z l(t))\\
                &\quad \quad - P_t^\mu (f(t, Y_t^{\mu}, Z_t^{\mu}, \mu_t) - f(t, Y_t^{\pi}, Z_t^{\pi}, \pi_t))\Bigg)dt\\
                & \quad \ \ + \Big(P_t^{\mu}(Z_t^{\pi}-Z_t^{\mu}) -(Y_t^{\pi} - Y_t^{\mu})(-\nabla_z f(t)P_t^{\mu} + \nabla_z l(t))\Big) d W_t,
       \end{aligned}
       \end{equation*}
so by the convexity of $\phi$ it yields that
        \begin{equation}\label{eq:i1}
            \begin{aligned}
                I_1  
                & \geq \mathbb{E}\Big[\nabla_y \phi(Y_0^{\mu})(Y_0^{\pi} - Y_0^{\mu})\Big]\\
                & = -\mathbb{E}\Big[P_0^{\mu }(Y_0^{\pi} - Y_0^{\mu})\Big]\\
                & = -\mathbb{E}\Big[\int_0^T (Y_t^{\pi} - Y_t^{\mu})\nabla_y  H^{0} (t, Y_t^{\mu}, Z_t^{\mu}, P_t^{\mu}, \mu_t) dt\Big] \\
                & \quad - \mathbb{E}\Big[\int_0^T (Z_t^{\pi} - Z_t^{\mu})\nabla_z  H^{0} (t, Y_t^{\mu}, Z_t^{\mu}, P_t^{\mu}, \mu_t) dt\Big] \\
                & \quad - \mathbb{E}\Big[\int_0^T P_t^{\mu} (f(t, Y_t^{\mu}, Z_t^{\mu}, \mu_t) - f(t, Y_t^{\pi}, Z_t^{\pi}, \pi_t))dt\Big].
            \end{aligned}
        \end{equation}
Hence by (\ref{cz2})--(\ref{eq:i1}) and the convexity of $H^{\sigma}$ we have
         \begin{align*}
                & J^{\sigma}(\pi) - J^{\sigma}(\mu) \\
                & \geq \mathbb{E}\Big[\int_0^T \Big(H^{\sigma}(t, Y_t^{\pi}, Z_t^{\pi}, P_t^{\mu},\pi_t, e^{-U}) - H^{\sigma}(t, Y_t^{\mu}, Z_t^{\mu}, P_t^{\mu},\mu_t, e^{-U})\Big)dt \Big]\\
                & \quad -\mathbb{E}\Big[\int_0^T (Y_t^{\pi} - Y_t^{\mu})\nabla_y  H^{0} (t, Y_t^{\mu}, Z_t^{\mu}, P_t^{\mu}, \mu_t) dt \Big] \\
                & \quad - \mathbb{E}\Big[\int_0^T (Z_t^{\pi} - Z_t^{\mu})\nabla_z  H^{0} (t, Y_t^{\mu}, Z_t^{\mu}, P_t^{\mu}, \mu_t) dt\Big] \\
                & \geq \mathbb{E}\Big[\int_0^T \int \Big(\frac{\delta H^0}{\delta m}(t, Y_t^{\mu}, Z_t^{\mu}, P_t^{\mu}, \mu_t)(a) + \frac{\sigma^2}{2}(\ln\mu_t + U(a))\Big)(\pi_t(a) - \mu_t(a))da dt\Big].
            \end{align*}

Therefore, by the condition (\ref{eq: general smp}), it follows that $J^{\sigma}(\pi) - J^{\sigma}(\mu) \geq 0$ for any $\pi \in \mathcal{A}$, which implies that $\mu$ is the optimal control.
    \end{proof}

We then give an implicit form of optimal control of BSDE (\ref{equation:bsde1}) with the cost functional (\ref{eq:func bsde}).
Assume that an optimal control $\mu$ exists. For fixed $t\in[0,T]$ and $\omega \in \Omega$, (\ref{eq: general smp}) in Theorems \ref{cz4} and \ref{thm:verification theorem1} is equivalent to
    \begin{equation}\label{eq:optimization problem0}
        \begin{aligned}
            \mu_t \in \mathop{\arg\min}\limits_{m \in \mathcal{P}_2(\mathbb{R}^p)} H^{\sigma}(t,Y^\mu_t, Z^\mu_t, P^\mu_t,m,e^{-U}),
        \end{aligned}
         \end{equation}
         where $(Y^\mu, Z^\mu)$ and $P^\mu$ are the solutions to BSDE (\ref{equation:bsde1}) and the adjoint equation (\ref{eq:duality equation}), respectively, with the control variable $\mu$.

Assume that $U \in C^{\infty}$, $\nabla_a U$ is Lipschitz-continuous, and there exists constants $C_U >0$ and  $C^\prime_U \in \mathbb{R}$ such that for any $a \in \mathbb{R}^p$, it holds that
$
        \nabla_a U(a) \cdot a \geq C_U|a|^2 + C^\prime_U
$.       According to Proposition 2.5 in Hu, Ren, {\v S}i{\v s}ka and Szpruch \cite{hu-2019}, in which the deterministic control system is studied, the admissible control set of the optimization problem \eqref{eq:optimization problem0} can be enlarged to $\mathcal{P}(\mathbb{R}^p)$ with above assumptions on $U$. So  \eqref{eq:optimization problem0} is equivalent to a constrained optimization problem in these settings, i.e., for $t\in[0,T]$,
        \begin{equation}\label{eq:optimization problem2}
        \begin{aligned}
                         \mu_t & \in  \mathop{\arg\min}\limits_{m \in \mathcal{M}(\mathbb{R}^p)} -pf(t,Y^\mu_t, Z^\mu_t,P^\mu_t,m) + l(t,Y^\mu_t, Z^\mu_t,P^\mu_t,m) + \frac{\sigma^2}{2}Ent(m \mid e^{-U}),
        \end{aligned}
         \end{equation}
with a constraint $\int m (a) da = 1$.

    By Lagrange multiplier method, we further transform this optimization problem into an equivalent optimization problem without constraint. For this, we introduce the Lagrange function $L: \mathcal{M}(\mathbb{R}^p) \times \mathbb{R} \rightarrow \mathbb{R}$ with the Lagrange multiplier $\beta$ as below
         \begin{equation*}\label{eq:lagrange equation}
             L(m, \beta) = H^{\sigma}(t,Y^\mu_t, Z^\mu_t,P^\mu_t,m,e^{-U}) + \beta (\int m (a) da - 1).
         \end{equation*}
            Then we define the Lagrange duality function 
                 $G(\beta) = \min\limits_{m \in \mathcal{M}(\mathbb{R}^p)} L(m, \beta)$.
         By the weak duality theory, we know 
         $G(\beta) \leq \min\limits_{m \in \mathcal{P}(\mathbb{R}^p)} H^{\sigma}(t,Y_t, Z_t,P_t,m,e^{-U})$. Hence the goal now is to solve
         \begin{equation}\label{eq:lagrange duality problem}
             \beta^{*} \in \mathop{\arg\max}\limits_{\beta \in \mathbb{R}} G(\beta) = \mathop{\arg\max}\limits_{\beta \in \mathbb{R}} \min_{m \in \mathcal{M}(\mathbb{R}^p)} L(m, \beta).
         \end{equation}
        With Assumption \ref{ass: convex},
        (\ref{eq:optimization problem2}) is a convex optimization problem and satisfies Slater's condition of convex optimization theory, which leads to the strong duality of (\ref{eq:optimization problem2}) and (\ref{eq:lagrange duality problem}):
         \begin{equation*}\label{eq:strong duality}
             \max_{\beta \in \mathbb{R}} \min_{m \in \mathcal{M}(\mathbb{R}^p)} L(m, \beta) = \min_{m \in \mathcal{P}(\mathbb{R}^p)} H^{\sigma}(t,Y^\mu_t, Z^\mu_t, P^\mu_t,m,e^{-U}).
         \end{equation*}
By the first order condition for the flat derivative 
of $H^0$ 
we have
         $$
         \left\{\begin{aligned}
         &\frac{\delta H^{0}}{\delta m}(t,Y^\mu_t, Z^\mu_t, P^\mu_t,m)(a) + \frac{\sigma^2}{2}(U(a) + \ln m(a) + 1) + \beta  = 0,\\
         & \int m (a) da = 1.
         \end{aligned}\right.
         $$
By solving above equation, we know that the control $\mu_t$, $t\in[0,T]$, is a fixed point of the following equation
\begin{equation}\label{eq:fixed point}
         \mu_t(a) = \frac{e^{-U(a) - \frac{2}{\sigma^2}\frac{\delta H^{0}}{\delta m}(t,Y^{\mu}_t, Z^{\mu}_t, P^{\mu}_t,\mu_t)(a)}}{\int e^{-U(a) - \frac{2}{\sigma^2}\frac{\delta H^{0}}{\delta m}(t,Y^{\mu}_t, Z^{\mu}_t, P^{\mu}_t,\mu_t)(a) }da},
         \end{equation}
and the corresponding Lagrange multiplier
    \begin{equation}\label{cz5}
         \begin{aligned}
         \beta = \frac{\sigma^2}{2} \Big(\ln (\int e^{-U(a) - \frac{2}{\sigma^2}\frac{\delta H^{0}}{\delta m}(t,Y^{\mu}_t, Z^{\mu}_t,P^{\mu}_t,\mu_t)(a) }da) -1\Big).
         \end{aligned}
\end{equation}
Note that here $\mu_t\in\mathcal{P}(\mathbb{R}^p)$, so it is not a solution to Problem (\textbf{P1}) unless $\mu_t\in\mathcal{P}_2(\mathbb{R}^p)$ and $\mathbb{E}\left[ \int_0^T Ent(\mu_t\mid e^{-U}) d a d t \right]<\infty$. Actually, according to \cite{hu-2019} we further know from the assumption on $U$ that there exist constants $C^\prime$ and $C$ satisfying $0 \leq C^\prime \leq C$ such that for any $a \in \mathbb{R}^p$,
\begin{equation*}\label{eq:U}
    C^\prime |a|^2 - C \leq U(a) \leq C(1 + |a|^2).
\end{equation*}
So if $\mu$ in (\ref{eq:fixed point}) satisfies $\mathbb{E}\left[ \int_0^T Ent(\mu_t\mid e^{-U}) d a d t \right]<\infty$,
$Ent(\mu_t \mid e^{-U}) < \infty$ a.e. a.s.,
which leads to
$
\int |a|^2 \mu_t(a) da \leq \int U(a) \mu_t(a) da < \infty$ a.e. a.s., i.e. $\mu_t \in \mathcal{P}_2(\mathbb{R}^p)$.

Without a specific form of $H^0$, the above discussion for the existence of an optimal control is based on some assumptions and the optimal control remains implicit. We would give an explicit form of $\mu$ in \eqref{eq:fixed point} and prove that this $\mu$ is exactly the optimal control in the LQ case. One can refer to Proposition 2.5 in \cite{hu-2019} for more discussions about similar formulations as (\ref{eq:fixed point}) in deterministic cases.

To end this section, let's see a relaxed optimal control problem of BSDE, which is a special case of entropy regularized control problem. 
For $f: \Omega\times[0,T] \times \mathbb{R}^n \times \mathbb{R}^{n} \times \mathbb{R}^{n} \times \mathbb{R}^p \rightarrow \mathbb{R}^n$, $\xi \in : \Omega\rightarrow \mathbb{R}^n$ and an admissible control $a\in \mathcal{U}_{ad} = L^{2}_{\mathscr{F}}(0,T;\mathbb{R}^p)$, consider the controlled BSDE
\begin{equation}\label{equation:bsde2}
\left\{\begin{aligned}
-d y^{a}_t & =f(t,y^{a}_t, z^{a}_t,a_t) d t-z^{a}_t d W_t, \\
y_t & =\xi.
\end{aligned}\right.
\end{equation}
By law of large numbers we have the exploratory BSDE
\begin{equation}\label{equation:bsde3}
\left\{\begin{aligned}
-d \tilde{y}^\pi_t & =\tilde{f}(t, \tilde{y}^\pi_t, \tilde{z}^\pi_t,  \boldsymbol{\pi}_t) d t-\tilde{z}^\pi_t d W_t, \\
y_t & =\xi ,
\end{aligned}\right.
\end{equation}
where $\boldsymbol{\pi}$ is the distribution of control, $(\tilde{y}^\pi, \tilde{z}^\pi)$ is the exploratory state variable and $\tilde{f}(t, \tilde{y}^\pi_t, \tilde{z}^\pi_t, \boldsymbol{\pi}_t) = \int f(t,\tilde{y}^\pi_t, \tilde{z}^\pi_t, a)\boldsymbol{\pi}_t(a) da$. To see how to get BSDE \eqref{equation:bsde3}, let's set $(y^i, z^i)$ to be the copy of the path generated from the dynamics (\ref{equation:bsde2}) with the control $a^i$ sampled independently under this policy $\boldsymbol{\pi}$.
For any $0\leq t\leq T$, we have
$$
\Delta y_t^i \equiv y_{t+\Delta t}^i-y_t^i \approx -f\left(t,y_t^i,z_t^i, a_t^i\right) \Delta t+z_t^i\left(W_{t+\Delta t}^i-W_t^i\right).
$$
Here each $y^i, i=1,2, \ldots, N$, can be viewed as a copy of an independent sample from $\tilde{y}$. It then follows from  the law of large numbers that, as $N \rightarrow \infty$,
$$
\begin{aligned}
\frac{1}{N} \sum_{i=1}^N \Delta y_t^i&\approx -\frac{1}{N} \sum_{i=1}^N f\left(t,y_t^i,z_t^i, a_t^i\right)\Delta t  +\frac{1}{N} \sum_{i=1}^N z_t^i \left(W_{t+\Delta t}^i-W_t^i\right)\\
& \stackrel{\text { a.s. }}{\longrightarrow}
 \mathbb{E}\left[\int -f(t,\tilde{y}_t, \tilde{z}_t,a)\boldsymbol{\pi}_t(a ) da \Delta t\right]+\mathbb{E}\left[\int  z_t^i\boldsymbol{\pi}_t(a) d a\right]\mathbb{E}\left[W_{t+\Delta t}-W_t\right] \\
&\quad\quad=  \mathbb{E}\left[\int -f(t,\tilde{y}_t, \tilde{z}_t, a)\boldsymbol{\pi}_t(a ) da \Delta t\right].
\end{aligned}
$$
In the above deduction, we have assumed that both $\boldsymbol{\pi}$ and $\tilde{z}$ are identically distributed over $[t, t+\Delta t]$ and independent of the increments of the sample paths of $W$. Then the entropy-regularized cost function appears as
\begin{equation}\label{eq: func bsde2}
    \begin{aligned}
            &J^{\sigma}(T,\xi;\pi) =\mathbb{E}\left[\int_0^T \Big( \int l\left(t, \tilde{y}_t, \tilde{z}_t, a\right)\boldsymbol{\pi}_t(a)da + \frac{\sigma^2}{2} Ent(\boldsymbol{\pi}_t\mid e^{-U}) \Big)d t+\phi\left(\tilde{y}(0)\right) \right].
    \end{aligned}
\end{equation}
By Definition \ref{def: flat derivative},
\begin{equation*}
    \begin{aligned}
    \frac{\tilde{f}(t, \tilde{y}_t, \tilde{z}_t, \boldsymbol{\pi}(a ))}{\delta m} = f(t, \tilde{y}_t, \tilde{z}_t, a)\ \ {\rm and}\ \
    \frac{\tilde{l}(t, \tilde{y}_t, \tilde{z}_t, \boldsymbol{\pi}(a ))}{\delta m} = l(t, \tilde{y}_t, \tilde{z}_t, a).
    \end{aligned}
\end{equation*}
Hence
\begin{equation*}
    \frac{\delta H^{0}}{\delta m}(t,\tilde{y}_t, \tilde{z}_t, \tilde{p}_t, \boldsymbol{\pi}_t)(a) = -\tilde{p}_tf(t,\tilde{y}_t, \tilde{z}_t, a) + l(t,\tilde{y}_t, \tilde{z}_t, a),
\end{equation*}
where $\tilde{p}$ is the solution to the corresponding adjoint equation, and a candidate optimal control 
for the cost functional (\ref{eq: func bsde2}) is
\begin{equation*}\label{eq: optimal relaxed control}
    \begin{aligned}
           \mu_t(a) 
           = \frac{e^{-U(a) - \frac{2}{\sigma^2}[-p_tf(t,\tilde{y}^\mu, \tilde{z}^\mu, a) + l(t,\tilde{y}^\mu, \tilde{z}^\mu, a)]}}{\int e^{-U(a) - \frac{2}{\sigma^2}[-p_tf(t,\tilde{y}^\mu, \tilde{z}^\mu, a) + l(t,\tilde{y}^\mu, \tilde{z}^\mu, a)]}da}.
    \end{aligned}
\end{equation*}

\section{Backward Stochastic Linear-Quadratic Control System with Entropy Regularization}
\label{sec:LQ}

Let $\mathbb{S}^n$ be the set of all $n \times n $ symmetric matrices,  $\mathbb{S}^n_+$ be the set of all $n \times n $ positive semi-definite matrices, $\hat{\mathbb{S}}^n_+$ be the set of all $n \times n $ positive definite matrices, and $\mathbb{I}_n$ be the $n \times n$ identity matrix.  
For  $t\in[0,T]$, $\pi\in\mathcal{A}$, consider a linear controlled BSDE
\begin{equation}\label{eq:linear bsde}
    \left\{\begin{aligned}
        & dY^\pi_t = (A_t Y^\pi_t + B_t \int a \pi_t(a) da + C_t Z^\pi_t )dt + Z^\pi_t dW_t,\\
        & Y_T = \xi
    \end{aligned}\right.
\end{equation}
and its cost functional
{\small\begin{equation}\label{eq: quadratic cost functional}
\begin{aligned}
         J^{\sigma}(\pi) = \frac{1}{2}\mathbb{E}\Big[ \int_o^T \Big(Y_t^{\pi}H_tY^\pi_t + \int aR_ta \pi_t(a) da+ Z_t^{\pi}N_tZ^\pi_t + \sigma^2 Ent(\pi_t | e^{-U})\Big)dt+ Y_0^{\pi}GY^\pi_0  \Big].
\end{aligned}
\end{equation}}
Then LQ problem is

(\textbf{P2}): to find an optimal $\mu \in \mathcal{A}$ such that
\begin{equation*}
    J^{\sigma}(\mu) = \inf_{\pi \in \mathcal{A}} J^{\sigma}(\pi).
\end{equation*}

We give the assumptions for the coefficients of LQ problem.
\begin{ass}\label{H:linear}
    (i) $\xi \in L^2_{\mathscr{F}_T}(\Omega; \mathbb{R}^n)$, $A, C \in L^{\infty}(0,T;\mathbb{R}^{n \times n})$ and $B \in  L^{\infty} \\(0,T;\mathbb{R}^{n \times p})$.\\
    (ii) $H, N\in L^{\infty}(0,T;\mathbb{S}^n_+)$, $R \in L^{\infty}(0,T;\mathbb{S}_+^p)$ and $G \in \mathbb{S}^n_+$.
\end{ass}

Assumption \ref{H:linear} guarantees that linear BSDE (\ref{eq:linear bsde}) has a unique  solution $(Y^\pi, Z^\pi) \\\in S_{\mathscr{F}}^2\left(0, T ; \mathbb{R}^n\right) \times L_{\mathscr{F}}^2\left(0, T ; \mathbb{R}^{n}\right)$.

From Theorem \ref{cz4}, \eqref{eq:fixed point} and \eqref{cz5}, the necessary condition of optimality in LQ case follows.
\begin{theorem}\label{thm:linear smp}
    Under Assumption \ref{H:linear}, if $\mu\in\mathcal{A}$ is an optimal control of Problem (\textbf{P2}) with corresponding optimal state process $(Y^\mu, Z^\mu)$, then
    \begin{equation*}\label{eq:linear adjoint eq}
    \left\{\begin{aligned}
        & dP^\mu_t = -(A_t P^\mu_t + H_t Y^\mu_t )dt -(C_tP^\mu_t + N_t Z^\mu_t) dW_t,\\
        & P^\mu_0 = -G Y^\mu_0,
    \end{aligned}\right.
\end{equation*}
has a unique solution $P^\mu\in L^2_{\mathscr{F}}(0,T;\mathbb{R}^{n})$ such that for any $t \in [0,T]$,
    \begin{equation*}\label{eq:linear optimal control 0 eq}
    \left\{\begin{aligned}
        & P_t^\mu B_t a + \frac{1}{2} aR_t a + \frac{\sigma^2}{2}(U(a) + \ln \mu_t (a) + 1) + \beta = 0,\ \  {\rm for}\ {\rm any}\ a\in \mathbb{R}^p,\\ 
        & \int \mu_t (a) da = 1, 
    \end{aligned}\right.
\end{equation*}
where $\beta$ is a random variable coming from Lagrange multiplier method. Moreover, $(Y^\mu, Z^\mu, P^\mu, \mu)$ composes a stochastic Hamiltonian system
\begin{equation}\label{eq:Hamiltonian systems}
    \left\{\begin{aligned}
        & dY^\mu_t = (A_t Y^\mu_t + B_t \int_O a \mu_t(a) da + C_t Z^\mu_t)dt + Z^\mu_t dW_t,\\
        & dP^\mu_t = -(A_t P^\mu_t + H_t Y_t )dt -(C_tP^\mu_t + N_t Z^\mu_t) dW_t,\\
        & Y^\mu_T = \xi, \quad P^\mu_0 = -G Y^\mu_0,\\
        & P_t^\mu B_t a + \frac{1}{2} aR_t a + \frac{\sigma^2}{2}(U(a) + \ln \mu_t (a) + 1) + \beta = 0,\ \  {\rm for}\ {\rm any}\ a\in \mathbb{R}^p,\\
        & \int\mu_t (a) da = 1, 
    \end{aligned}\right.
\end{equation}
and Hamiltonian system (\ref{eq:Hamiltonian systems}) gives an optimal control
\begin{equation}\label{eq:optimal slq control partial info}
    \left\{\begin{aligned}
    & \mu_t(a)  = \frac{e^{-U(a)-\frac{2}{\sigma^2}({P}^\mu_t B_t a + \frac{1}{2}a R_t a)}}{\int e^{-U(a)-\frac{2}{\sigma^2}({P}^\mu_t B_t a + \frac{1}{2}a^{} R_t a) }da},\\ 
    & \beta = \frac{\sigma^2}{2}(\ln(\int e^{-U(a)-\frac{2}{\sigma^2}({P}^\mu_t B_t a + \frac{1}{2}a R_t a) }da) -1).
    \end{aligned}\right.
\end{equation}
\end{theorem}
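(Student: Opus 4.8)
The plan is to recognize that Problem~(\textbf{P2}) is the linear--quadratic instance of the general control problem of Sections~3--4, and then to read off the conclusion from Theorem~\ref{cz4}, the adjoint equation \eqref{eq:duality equation}, and the formulas \eqref{eq:fixed point}--\eqref{cz5}. First I would match the data: writing \eqref{eq:linear bsde} in the form \eqref{equation:bsde1} gives $f(t,y,z,m)=-\big(A_t y+B_t\int a\,m(a)\,da+C_t z\big)$, while \eqref{eq: quadratic cost functional} gives $l(t,y,z,m)=\frac12\big(y H_t y+\int a R_t a\,m(a)\,da+z N_t z\big)$ and $\phi(y)=\frac12 y G y$, so that $\nabla_y f=-A_t$, $\nabla_z f=-C_t$, $\frac{\delta f}{\delta m}(a)=-B_t a$, $\nabla_y l=H_t y$, $\nabla_z l=N_t z$, $\frac{\delta l}{\delta m}(a)=\frac12 a R_t a$, $\nabla_y\phi=G y$, and $\frac{\delta^2 f}{\delta m^2}=\frac{\delta^2 l}{\delta m^2}\equiv 0$. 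Then I would verify that Assumption~\ref{H:linear} implies Assumption~\ref{H: bsde} (the uniform bounds and the growth bounds $|\frac{\delta f}{\delta m}|\leq K(1+|a|)$, $|\frac{\delta l}{\delta m}|\leq K(1+|a|^2)$ come from $A,B,C,H,N,R\in L^\infty$, the Lipschitz-in-$(y,z)$ conditions on the flat derivatives are trivial since these are $(y,z)$-independent, and the second flat derivatives vanish) and Assumption~\ref{ass: convex} ($-pf$ and the measure-part of $l$ are affine in $(y,z,m)$, the $(y,z)$-part of $l$ is convex as $H_t,N_t\in\mathbb{S}^n_+$, $\phi$ is convex as $G\in\mathbb{S}^n_+$, and $m\mapsto Ent(m\mid e^{-U})$ is convex), keeping the standing assumptions on $U$ from Section~4.

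With these substitutions the adjoint equation \eqref{eq:duality equation} is precisely the stated linear SDE for $P^\mu$; since $A,C\in L^\infty$ and $H_tY^\mu_t,N_tZ^\mu_t\in L_{\mathscr{F}}^2(0,T;\mathbb{R}^n)$ (as $Y^\mu\in S_{\mathscr{F}}^2$, $Z^\mu\in L_{\mathscr{F}}^2$) with $Y^\mu_0$ being $\mathscr{F}_0$-measurable, the well-posedness already noted for \eqref{eq:duality equation} gives a unique $P^\mu\in S_{\mathscr{F}}^2(0,T;\mathbb{R}^n)\subset L_{\mathscr{F}}^2(0,T;\mathbb{R}^n)$. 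Next, $\frac{\delta H^0}{\delta m}(t,y,z,p,m)(a)=-p\frac{\delta f}{\delta m}(a)+\frac{\delta l}{\delta m}(a)=p B_t a+\frac12 a R_t a$, which is notably independent of $m$; applying Theorem~\ref{cz4} with this expression gives \eqref{eq: general smp} in the form $\int\big(P^\mu_t B_t a+\frac12 a R_t a+\frac{\sigma^2}{2}(\ln\mu_t(a)+U(a))\big)(\pi_t(a)-\mu_t(a))\,da\geq0$. Running the Lagrange-multiplier analysis of Section~4 (enlarging the feasible set from $\mathcal{P}_2(\mathbb{R}^p)$ to $\mathcal{P}(\mathbb{R}^p)$ by Proposition~2.5 of \cite{hu-2019} under the assumptions on $U$, forming $L(m,\beta)=H^\sigma(t,Y^\mu_t,Z^\mu_t,P^\mu_t,m,e^{-U})+\beta(\int m\,da-1)$, and using Slater's condition together with Assumption~\ref{ass: convex} for strong duality) then yields the first-order relations $P^\mu_t B_t a+\frac12 a R_t a+\frac{\sigma^2}{2}(U(a)+\ln\mu_t(a)+1)+\beta=0$ and $\int\mu_t(a)\,da=1$. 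Because $\frac{\delta H^0}{\delta m}$ does not depend on $m$, the ``fixed point'' \eqref{eq:fixed point} and the multiplier \eqref{cz5} collapse to the genuine closed-form expressions \eqref{eq:optimal slq control partial info}, and collecting \eqref{eq:linear bsde}, the adjoint SDE, the boundary data $Y^\mu_T=\xi$, $P^\mu_0=-GY^\mu_0$, and the two algebraic relations assembles the Hamiltonian system \eqref{eq:Hamiltonian systems}.

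Since the theorem is a corollary of Theorem~\ref{cz4} and the discussion of Section~4, I do not anticipate a genuinely new difficulty; the steps that deserve attention are (i) checking that the measure-dependent parts of $f$ and $l$ satisfy the growth conditions of Assumption~\ref{H: bsde}, which is immediate from the $L^\infty$-bounds on $B$ and $R$, and (ii) confirming that the density in \eqref{eq:optimal slq control partial info} is well-defined, i.e.\ $\int e^{-U(a)-\frac{2}{\sigma^2}(P^\mu_t B_t a+\frac12 a R_t a)}\,da<\infty$, which follows from the coercivity $\nabla_a U(a)\cdot a\geq C_U|a|^2+C_U'$ with $C_U>0$ together with $R_t\in\mathbb{S}^p_+$, exactly as in the discussion following \eqref{cz5}; the required moment and entropy integrability of $\mu_t$ are then automatic because $\mu$ is assumed to be an admissible (indeed optimal) control in $\mathcal{A}$.
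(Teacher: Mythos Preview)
Your proposal is correct and matches the paper's approach exactly: the paper simply states that the theorem follows ``From Theorem~\ref{cz4}, \eqref{eq:fixed point} and \eqref{cz5}'' without further argument, and your write-up carries out precisely those substitutions and verifications in detail. If anything, you are more explicit than the paper in checking that the LQ data fit Assumptions~\ref{H: bsde} and~\ref{ass: convex} and in computing $\frac{\delta H^0}{\delta m}$.
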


We then consider a specific case by setting the reference measure to be a standard normal distribution, i.e. $e^{-U(a)} = \frac{e^{-\frac{|a|^2}{2}}}{\sqrt{(2\pi)^p}}$.
Then since $(R + \frac{\sigma^2}{2}\mathbb{I}_p) \in L^{\infty}(0,T;\hat{\mathbb{S}}^p_+)$, (\ref{eq:optimal slq control partial info}) implies
\begin{equation}\label{eq:linear optimal control}
    \begin{aligned}
            \mu_t(a) & = \frac{e^{-\frac{1}{\sigma^2}(a +(R_t + \frac{\sigma^2}{2}\mathbb{I}_p)^{-1}B_t{P}^\mu_t )^{}(R_t + \frac{\sigma^2}{2}\mathbb{I}_p)(a +(R_t + \frac{\sigma^2}{2}\mathbb{I}_p)^{-1}B_t^{}{P}^\mu_t )}}{\int e^{-\frac{1}{\sigma^2}(a +(R_t + \frac{\sigma^2}{2}\mathbb{I}_p)^{-1}B_t^{}{P}^\mu_t )^{}(R_t + \frac{\sigma^2}{2}\mathbb{I}_p)(a +(R_t + \frac{\sigma^2}{2}\mathbb{I}_p)^{-1}B_t^{}{P}^\mu_t )}da} \\
            &= \frac{1}{\sqrt{(2\pi)^p |det(\Sigma^\mu_t)|}} e^{-\frac{1}{2}(a +(R_t + \frac{\sigma^2}{2}\mathbb{I}_p)^{-1}B_t{P}^\mu_t)(\Sigma^\mu_t)^{-1}(a +(R_t + \frac{\sigma^2}{2}\mathbb{I}_p)^{-1}B_t{P}^\mu_t)},
    \end{aligned}
\end{equation}
where $\Sigma^\mu_t=\frac{\sigma^2}{2}(R_t + \frac{\sigma^2}{2}\mathbb{I}_p)^{-1}$. It appears that $\mu_t$ has a Gaussian distribution and $\Sigma^\mu_t$ is the covariance matrix of $\mu_t$. Hence, from (\ref{eq:linear optimal control}) we know $v_t^\mu:=\int a\mu_t(a)da = -(R_t + \frac{\sigma^2}{2}\mathbb{I}_p)^{-1}B_t{P}^\mu_t$ and $\mu_t\in\mathcal{P}_2(\mathbb{R}^p)$. 

\begin{rmk}
    If we only take $U(\cdot) \equiv 0$ and assume that $ R \in L^{\infty}(0,T;\hat{\mathbb{S}}^p_+)$, then $v_t$ coincides with the strict control, 
and the optimal control in this case satisfies
$$
\mu_t(a)=  \frac{1}{\sqrt{(2\pi)^p |det(\Sigma^\mu_t)|}} e^{-\frac{1}{2}(a +R_t^{-1}B_t{P}^\mu_t)(\Sigma^\mu_t)^{-1}(a +R_t ^{-1}B_t{P}^\mu_t)},
$$
where the covariance matrix $\Sigma^\mu_t = \frac{\sigma^2}{2} R_t^{-1}\mathbb{I}_p$ and 
$tr(\Sigma^\mu_t R_t) 
=\frac{\sigma^2}{2}p$. Also we can define the cost of exploration (COE) as in \cite{wang-2020},
\begin{align*}
     COE &:= \frac{1}{2} \mathbb{E}[\int_0^T \int aR_ta\mu_t(a)da - v_t^{*}R_tv_t^\mu dt ] \\
     & = \frac{1}{2} \mathbb{E}[\int_0^T \int (a-v_t^\mu) R_t(a-v_t^\mu)\mu_t(a)da dt ]\\
     & = \frac{1}{2} \mathbb{E}[\int_0^T tr(\Sigma^\mu_tR_t) dt]\\
     & = \frac{1}{2}  \mathbb{E}[\int_0^T\frac{\sigma^2}{2}pdt]\\
     & = \frac{\sigma^2}{4}pT.
\end{align*}
As $\sigma \rightarrow 0$, the cost of relaxed control degenerates to the cost of strict control in the following sense:
\begin{equation*}
    \left\{\begin{aligned}
        &  \mu_t \rightarrow \delta_{v^\mu_t}\ \text{weakly}\ \text{as}\ \sigma \rightarrow  \ {\rm a.e.} \ {\rm a.s.},\\
        & \lim_{\sigma \rightarrow 0} COE = 0,
    \end{aligned}\right.
\end{equation*}
where $\delta_{v^\mu_t}$ stands for the Dirac measure defined at $v^\mu_t$ (see also Exercise 14.4.2 in Klenke \cite{klenke-2020}).
\end{rmk}

Similar to Theorem \ref{thm:verification theorem1}, we give the sufficient condition for an optimal control in LQ case.
\begin{theorem}\label{thm:linear verification thm}
    Under Assumption \ref{H:linear}, 
    the control $\mu \in\mathcal{A}$ is an optimal control of Problem (\textbf{P2}) if for any $t\in[0,T]$, it satisfies the Hamiltonian system (\ref{eq:Hamiltonian systems})
   with $(Y^\mu, Z^\mu)$ and $P^\mu$ be the solutions to the corresponding BSDE \eqref{eq:linear bsde} and adjoint equation (\ref{eq:linear adjoint eq}), respectively.
\end{theorem}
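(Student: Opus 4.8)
The plan is to deduce the statement from the general sufficient condition, Theorem \ref{thm:verification theorem1}, by checking that Assumption \ref{ass: convex} holds automatically under Assumption \ref{H:linear} and that a control solving the Hamiltonian system \eqref{eq:Hamiltonian systems} necessarily satisfies the maximum-principle inequality \eqref{eq: general smp}. For the convexity check, the terminal cost $\phi(y)=\frac12 yGy$ is convex because $G\in\mathbb{S}^n_+$, while
$$
H^\sigma(t,y,z,p,m,m')=-p\big(A_ty+B_t{\textstyle\int}a\,m(da)+C_tz\big)+\tfrac12\big(yH_ty+{\textstyle\int}aR_ta\,m(da)+zN_tz\big)+\tfrac{\sigma^2}{2}Ent(m\mid m')
$$
is jointly convex in $(y,z,m)$: the quadratic forms $\frac12 yH_ty$ and $\frac12 zN_tz$ are convex since $H_t,N_t\in\mathbb{S}^n_+$, the map $m\mapsto\int aR_ta\,m(da)$ is linear, the relative entropy $m\mapsto Ent(m\mid m')$ is convex (see the remark after Definition \ref{def:admissible control1}), and the remaining terms are affine in $(y,z,m)$.

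For the second reduction, I would argue that the pointwise stationarity equations in \eqref{eq:Hamiltonian systems} --- namely $P^\mu_tB_ta+\frac12 aR_ta+\frac{\sigma^2}{2}(U(a)+\ln\mu_t(a)+1)+\beta=0$ for all $a\in\mathbb{R}^p$ together with $\int\mu_t(a)da=1$ --- say precisely that $a\mapsto\frac{\delta H^\sigma}{\delta m}(t,Y^\mu_t,Z^\mu_t,P^\mu_t,\mu_t,e^{-U})(a)$ is $da$-a.e.\ constant, which is the Euler--Lagrange/Lagrange-multiplier condition (as derived in Section 4, cf.\ \eqref{eq:fixed point}) for minimizing $m\mapsto H^\sigma(t,Y^\mu_t,Z^\mu_t,P^\mu_t,m,e^{-U})$ over probability densities. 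Since this functional is convex in $m$ --- strictly so modulo the affine normalization, as the entropy is strictly convex --- stationarity forces global minimality, so $\mu_t\in\mathop{\arg\min}_{m\in\mathcal{P}_2(\mathbb{R}^p)}H^\sigma(t,Y^\mu_t,Z^\mu_t,P^\mu_t,m,e^{-U})$; for a convex functional on the convex set $\mathcal{P}_2(\mathbb{R}^p)$ this is equivalent to
$$
\int\Big(\tfrac{\delta H^{0}}{\delta m}(t,Y^\mu_t,Z^\mu_t,P^\mu_t,\mu_t)(a)+\tfrac{\sigma^2}{2}(\ln\mu_t(a)+U(a))\Big)(\pi_t(a)-\mu_t(a))\,da\ge0\quad\text{for all }\pi_t\in\mathcal{P}_2(\mathbb{R}^p),
$$
the constant $1$ in the entropy's flat derivative disappearing because $\int(\pi_t-\mu_t)\,da=0$; this is exactly \eqref{eq: general smp}. (Alternatively, one can substitute the explicit Gaussian density \eqref{eq:linear optimal control} and verify minimality by a direct completion-of-squares computation.) With \eqref{eq: general smp} in hand and Assumption \ref{ass: convex} verified, Theorem \ref{thm:verification theorem1} applies to the present linear $f$, quadratic $l$ and $\phi$, and adjoint equation, giving $J^\sigma(\pi)\ge J^\sigma(\mu)$ for all $\pi\in\mathcal{A}$, i.e.\ $\mu$ is optimal for (\textbf{P2}). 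A self-contained alternative is to repeat the proof of Theorem \ref{thm:verification theorem1} in the LQ notation: split $J^\sigma(\pi)-J^\sigma(\mu)=I_1+I_2$ with $I_1=\frac12\mathbb{E}[Y^\pi_0GY^\pi_0-Y^\mu_0GY^\mu_0]$, bound $I_1\ge\mathbb{E}[\nabla_y\phi(Y^\mu_0)(Y^\pi_0-Y^\mu_0)]=-\mathbb{E}[P^\mu_0(Y^\pi_0-Y^\mu_0)]$ using $G\in\mathbb{S}^n_+$, apply It\^{o}'s formula to $P^\mu_t(Y^\pi_t-Y^\mu_t)$ on $[0,T]$, and close the estimate with the convexity of $H^\sigma$ in $(y,z,m)$ and \eqref{eq: general smp}.

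The delicate point is the passage from pointwise stationarity to the global inequality \eqref{eq: general smp}: one must confirm that the critical density is a genuine global minimizer over $\mathcal{P}_2(\mathbb{R}^p)$ and that the enlargement of the feasible set from $\mathcal{P}_2(\mathbb{R}^p)$ to $\mathcal{P}(\mathbb{R}^p)$ used in Section 4 (to invoke Hu, Ren, {\v S}i{\v s}ka and Szpruch \cite{hu-2019}) does not move the minimizer --- both rest on the convexity, strict up to normalization, of $m\mapsto H^\sigma(t,\cdot,m,e^{-U})$, which is transparent here because the only $m$-dependence besides the entropy is linear. A secondary bookkeeping step is to confirm that the Gaussian control of \eqref{eq:linear optimal control} lies in $\mathcal{A}$, i.e.\ $\mathbb{E}[\int_0^T\int|a|^2\mu_t(a)\,da\,dt]<\infty$ and $\mathbb{E}[\int_0^T Ent(\mu_t\mid e^{-U})\,dt]<\infty$: this follows from $\Sigma^\mu_t=\frac{\sigma^2}{2}(R_t+\frac{\sigma^2}{2}\mathbb{I}_p)^{-1}$ being uniformly bounded (as $R\in L^\infty(0,T;\mathbb{S}^p_+)$), the mean $v^\mu_t=-(R_t+\frac{\sigma^2}{2}\mathbb{I}_p)^{-1}B_tP^\mu_t$ being square-integrable (as $B\in L^\infty$ and $P^\mu\in L^2_{\mathscr{F}}$), and the closed-form finiteness of the relative entropy between two Gaussian measures.
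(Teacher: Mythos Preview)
Your proposal is correct and matches the paper's approach: the paper gives no proof of this theorem, stating only that it is ``similar to Theorem \ref{thm:verification theorem1}'', and both of your alternatives---verifying Assumption \ref{ass: convex} and \eqref{eq: general smp} so as to invoke Theorem \ref{thm:verification theorem1}, or repeating that proof verbatim in the LQ notation---are precisely that reduction. Your final admissibility check is unnecessary here, since $\mu\in\mathcal{A}$ is already part of the hypothesis; that verification is instead the content of the Corollary that follows Theorem \ref{thm:linear smp} in the paper.
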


Then in the case $e^{-U(a)} = \frac{e^{-\frac{|a|^2}{2}}}{\sqrt{(2\pi)^p}}$, we study the existence and uniqueness of optimal control in LQ case following the decoupling technique for backward stochastic system proposed in Lim and Zhou \cite{lim-2001}.

To begin with, assume that $Y^\mu$ has a decoupling form like
\begin{equation}\label{eq:decoupling}
    Y^\mu_t = \Theta_t {P}^\mu_t + \phi_t,
\end{equation}
where 
$\Theta$ is a deterministic process with $\Theta_T =0$ and differentiable on $t$, and $\phi$ satisfies BSDE
\begin{equation}\label{eq:phi01}
    \left\{\begin{aligned}
        & d \phi_t = \lambda_t dt + \eta_t dW_t,\\
        & \phi_T = \xi
    \end{aligned}\right.
\end{equation}
for some adapted processes $\lambda$ and $\eta$ which will be determined later.

Applying It\^{o} formula to $Y^\mu_t$, by (\ref{eq:Hamiltonian systems}) and (\ref{eq:decoupling}) we have $$
\begin{aligned}
     0 & = d Y^\mu_t - \dot{\Theta}_t{P}^\mu_tdt - \Theta_t d{P}^\mu_t - d\phi_t\\
       & = \left(A_tY^\mu_t + B_t \int a \mu_t(a) da + C_tZ^\mu_t \right)dt + Z^\mu_tdW_t\\
       & \quad \ \ - \dot{\Theta}_t{P}^\mu_tdt + \Theta_t(A_t{P}^\mu_t + H_t{Y}^\mu_t)dt + \Theta_t(C_t{P}^\mu_t + N_t{Z}^\mu_t)dW_t- \lambda_t dt - \eta_t dW_t.
\end{aligned}
$$
Bearing in mind that in this case $\int a \mu_t(a) da = -(R_t + \frac{\sigma^2}{2}\mathbb{I}_p)^{-1}B_t{P}^\mu_t$, we further have
$$
\left\{\begin{aligned}
     &  \lambda_t = A_tY^\mu_t - B_t(R_t + \frac{\sigma^2}{2}\mathbb{I}_p)^{-1}B_t{P}^\mu_t + C_tZ_t  - \dot{\Theta}_t{P}^\mu_t + \Theta_t(A^{}_t{P}^\mu_t + H_t {Y}^\mu_t),\\
     & Z^\mu_t + \Theta_t(C_t{P}^\mu_t + N_t{Z}^\mu_t) - \eta_t = 0,\\
\end{aligned}\right.
$$
which implies $Z^\mu_t = (\mathbb{I}_p + \Theta_t N_t)^{-1}({\eta}_t - \Theta_t C_t {P}^\mu_t)$.
Since the coefficient ahead of ${P}^\mu_t$ is $0$, we get the Riccati equation
\begin{equation}\label{eq:riccati eq}
        \left\{\begin{aligned}
        & \dot{\Theta}_t - A_t\Theta_t - \Theta_t A_t - \Theta_t H_t \Theta_t + (R_t + \frac{\sigma^2}{2}\mathbb{I}_p)^{-1}B_t + C_t (\mathbb{I}_p + \Theta_t N_t)^{-1} \Theta_t C^{}_t = 0,\\
        & \Theta_T = 0.
    \end{aligned}\right.
\end{equation}
It is well known that the above Riccati equation (\ref{eq:riccati eq}) has a unique solution $\Theta \in L^{\infty}(0,T; \mathbb{S}^n_+)$ (see e.g. \cite{lim-2001}). Hence \eqref{eq:phi01} can be rewritten as below:
\begin{equation}\label{eq:phi02}
    \left\{\begin{aligned}
        & d \phi_t = \left((A_t + \Theta_t H_t){\phi}_t +  C_t (\mathbb{I}_p + \Theta_t N_t)^{-1}{\eta}_t \right) dt + \eta_t dW_t,\\
        & \phi_T = \xi.
    \end{aligned}\right.
\end{equation}
The solvability of BSDE (\ref{eq:phi02}) comes from the classical results in \cite{par-pen1}.

\begin{theorem}
    If the reference measure is a standard normal distribution, i.e. $e^{-U(a)} = \frac{e^{-\frac{|a|^2}{2}}}{\sqrt{(2\pi)^p}}$, under Assumption \ref{H:linear}, stochastic Hamiltonian system (\ref{eq:Hamiltonian systems}) has a unique solution $(Y^\mu,Z^\mu,P^\mu, \mu)$, where for any $t \in [0,T]$,
 \begin{equation}\label{cz7}
         \left\{\begin{array}{l}
              Y^\mu_t = \Theta_t{P}^\mu_t + \phi_t , \\
              Z^\mu_t = (\mathbb{I}_n + \Theta_t N_t)^{-1}({\eta}_t - \Theta_t C_t {P}^\mu_t), \\
              Y^\mu_0 = (\mathbb{I}_n + \Theta_0 G)^{-1}\phi_0,
         \end{array}\right.
\end{equation}
$\mu_t$ is Gaussian with the covariance matrix $\Sigma^\mu_t=\frac{\sigma^2}{2}(R_t + \frac{\sigma^2}{2}\mathbb{I}_p)^{-1}$ and the mean $v_t^\mu$ satisfying
    $
    (R_t + \frac{\sigma^2}{2}\mathbb{I}_p)v^\mu_t + B_tP^\mu_t = 0$,
    $\Theta$ is the solution to Riccati equation (\ref{eq:riccati eq}) and $(\phi, \eta)$ is the solution to BSDE (\ref{eq:phi02}).
\end{theorem}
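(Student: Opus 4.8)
The plan is to show that the decoupling relation $Y^\mu_t=\Theta_tP^\mu_t+\phi_t$ simultaneously produces a solution of (\ref{eq:Hamiltonian systems}) and is forced on every solution, so that both existence and uniqueness reduce to the already available well-posedness of the Riccati equation (\ref{eq:riccati eq}), of the BSDE (\ref{eq:phi02}), and of a linear SDE. First I would record three preliminary facts: (a) by \cite{lim-2001} the Riccati equation (\ref{eq:riccati eq}) has a unique solution $\Theta\in L^{\infty}(0,T;\mathbb{S}^n_+)$; (b) for this $\Theta$, the BSDE (\ref{eq:phi02}) has a unique solution $(\phi,\eta)\in S^2_{\mathscr{F}}(0,T;\mathbb{R}^n)\times L^2_{\mathscr{F}}(0,T;\mathbb{R}^{n})$ by \cite{par-pen1}; and (c) since $\Theta_t,N_t,G\in\mathbb{S}^n_+$, the matrices $\mathbb{I}_n+\Theta_tN_t$ and $\mathbb{I}_n+G\Theta_0$ (equivalently $\mathbb{I}_n+\Theta_0G$) are invertible, because $\Theta_tN_t$ has the same eigenvalues as the positive semidefinite matrix $\Theta_t^{1/2}N_t\Theta_t^{1/2}$ and hence a nonnegative spectrum, and similarly for $\Theta_0G$; thus all inverses appearing in (\ref{cz7}) and below are well defined.

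For existence I would then define $P^\mu$ as the unique solution in $S^2_{\mathscr{F}}(0,T;\mathbb{R}^n)$ of the closed linear forward SDE obtained by substituting $Y^\mu_t=\Theta_tP^\mu_t+\phi_t$ and $Z^\mu_t=(\mathbb{I}_n+\Theta_tN_t)^{-1}(\eta_t-\Theta_tC_tP^\mu_t)$ into the $P$-line of (\ref{eq:Hamiltonian systems}), with deterministic initial datum $P^\mu_0=-(\mathbb{I}_n+G\Theta_0)^{-1}G\phi_0$; its coefficients are affine in $P^\mu$ with bounded deterministic multipliers and $L^2_{\mathscr{F}}$ inhomogeneities $H_t\phi_t$ and $N_t(\mathbb{I}_n+\Theta_tN_t)^{-1}\eta_t$, so classical SDE theory applies. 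Setting $Y^\mu,Z^\mu$ by (\ref{cz7}) and $\mu_t$ by the Gaussian density (\ref{eq:linear optimal control}), the $P$-line and $P^\mu_0=-GY^\mu_0$ hold by construction, the two algebraic lines and $\int\mu_t(a)da=1$ hold by definition of that density, which also gives $v^\mu_t=\int a\mu_t(a)da=-(R_t+\frac{\sigma^2}{2}\mathbb{I}_p)^{-1}B_tP^\mu_t$, and the $Y$-line together with $Y^\mu_T=\xi$ follows by applying It\^o's formula to $\Theta_tP^\mu_t+\phi_t$, inserting the $P^\mu$- and $\phi$-dynamics, and using that the coefficient multiplying $P^\mu_t$ in the resulting drift is exactly the left-hand side of (\ref{eq:riccati eq}) and therefore vanishes, while $\Theta_T=0$ handles the terminal condition; this is just the computation preceding the theorem, read in reverse. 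Finally $\mu\in\mathcal{A}$ since $\Sigma^\mu_t$ is bounded, $\mathbb{E}\int_0^T|v^\mu_t|^2dt<\infty$ (as $v^\mu_t$ is affine in $P^\mu_t\in S^2_{\mathscr{F}}$), and for Gaussians both $Ent(\mu_t\mid e^{-U})$ and $\int|a|^2\mu_t(a)da$ are controlled by $\Sigma^\mu_t$ and $|v^\mu_t|^2$.

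For uniqueness I would take an arbitrary solution $(Y,Z,P,\mu)$ of (\ref{eq:Hamiltonian systems}). The algebraic lines force $\mu_t$ to be the Gaussian density built from $P_t$, hence $\int a\mu_t(a)da=-(R_t+\frac{\sigma^2}{2}\mathbb{I}_p)^{-1}B_tP_t$. Defining $\bar\phi_t:=Y_t-\Theta_tP_t$ with $\Theta$ the Riccati solution, It\^o's formula applied to $\bar\phi$ together with the $Y$- and $P$-lines shows, after the same Riccati cancellation of the $P_t$-terms, that $(\bar\phi_t,\bar\eta_t)$ with $\bar\eta_t:=(\mathbb{I}_n+\Theta_tN_t)Z_t+\Theta_tC_tP_t$ solves exactly the BSDE (\ref{eq:phi02}); by uniqueness there $\bar\phi=\phi$ and $\bar\eta=\eta$, so $Y$ and $Z$ are the affine functions of $P$ displayed in (\ref{cz7}). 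Substituting these back, $P$ solves the same linear SDE as the candidate $P^\mu$ above, and $P_0=-GY_0$ with $Y_0=\Theta_0P_0+\phi_0$ forces $(\mathbb{I}_n+G\Theta_0)P_0=-G\phi_0$, the same initial datum; uniqueness for that SDE yields $P=P^\mu$, hence $Y=Y^\mu$, $Z=Z^\mu$ and $\mu$ equals the Gaussian control above. The points I expect to require the most care are the invertibility assertions in (c) and the bookkeeping in the It\^o expansion that isolates the Riccati operator acting on $P^\mu_t$; the rest is a routine assembly of the uniqueness theorems for linear SDEs and for Lipschitz BSDEs \cite{par-pen1}, the known solvability of the Riccati equation \cite{lim-2001}, and the explicit Gaussian computation already carried out before (\ref{eq:linear optimal control}).
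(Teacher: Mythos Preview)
Your existence argument coincides with the paper's: define $P^\mu$ by the closed linear SDE obtained from the decoupling ansatz, then recover $Y^\mu,Z^\mu$ and verify the Hamiltonian system via the It\^o computation and the Riccati cancellation. Your preliminary point (c) on the invertibility of $\mathbb{I}_n+\Theta_tN_t$ and $\mathbb{I}_n+\Theta_0G$ is a useful explicit justification that the paper leaves implicit. The admissibility check for $\mu$ is not part of this theorem in the paper (it appears as a separate corollary), but including it here is harmless.

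Your uniqueness argument, however, is genuinely different from the paper's. The paper takes two solutions, sets $\bar Y=Y-Y'$, $\bar Z=Z-Z'$, $\bar P=P-P'$, applies It\^o's formula to $\bar Y_t\bar P_t$, and obtains the energy identity
\[
\mathbb{E}[\bar Y_0G\bar Y_0]=-\mathbb{E}\!\int_0^T\!\big(\bar Y_tH_t\bar Y_t+\bar P_tB_t(R_t+\tfrac{\sigma^2}{2}\mathbb{I}_p)^{-1}B_t\bar P_t+\bar Z_tN_t\bar Z_t\big)dt,
\]
from which $B_t\bar P_t=0$ (hence $\bar v_t=0$) follows by positivity; then $(\bar Y,\bar Z)$ solves a homogeneous linear BSDE and must vanish, forcing $\bar P\equiv0$. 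You instead show that the decoupling relation is \emph{forced}: for any solution, $\bar\phi_t:=Y_t-\Theta_tP_t$ together with $\bar\eta_t:=(\mathbb{I}_n+\Theta_tN_t)Z_t+\Theta_tC_tP_t$ solves exactly (\ref{eq:phi02}), so $\bar\phi=\phi$, $\bar\eta=\eta$ by BSDE uniqueness, and then $P$ satisfies the same linear SDE with the same initial datum as $P^\mu$. Both routes are valid. The paper's route is quicker and exploits the sign structure of $G,H,N,R$ via a single inner-product identity; your route is more structural, needs no positivity argument beyond the invertibilities in (c), and has the pleasant by-product of showing that the representation (\ref{cz7}) holds for \emph{every} solution rather than just the constructed one.
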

\begin{proof}
We first verify that (\ref{cz7}) and the Gaussian random variable $\mu_t$ give a solution to stochastic Hamiltonian systems (\ref{eq:Hamiltonian systems}). Consider SDE
\begin{equation}\label{eq: sde with filtering}
        \left\{\begin{array}{l}
          d P^\mu_t = - \left(A_t P^\mu_t + H_t(\Theta_t {P}^\mu_t + \phi_t)\right)dt\\
          \quad \quad \quad - \left(C_t P^\mu_t + N_t \left( (\mathbb{I}_n + \Theta_t N_t)^{-1}({\eta}_t - \Theta_t C_t {P}^\mu_t)\right)\right)d W_t,\\
          P^\mu_0 = - G  (\mathbb{I}_n + \Theta_0 G)^{-1}\phi_0.
         \end{array}\right.
\end{equation}
Obviously, the linear SDE \eqref{eq: sde with filtering} has a unique solution $P^\mu$.
Applying It\^o formula to $Y^\mu_t = \Theta_t {P}^\mu_t + \phi_t$, we have
\begin{align*}
d Y^\mu_t= &\left(A_t Y^\mu_t - B_t(R_t + \frac{\sigma^2}{2}\mathbb{I}_p)^{-1}B_t{P}^\mu_t + C_t (\mathbb{I}_n + \Theta_t N_t)^{-1}({\eta}_t - \Theta_t C_t{P}^\mu_t)\right)dt \\
        & +(\mathbb{I}_n + \Theta_t N_t)^{-1}({\eta}_t - \Theta_t C_t {P}^\mu_t) dW_t,
    \end{align*}
     and $Y^\mu_0 = (\mathbb{I}_n + \Theta_0 G)^{-1}\phi_0$. Noticing
             $Z^\mu_t =  (\mathbb{I}_n + \Theta_t N_t)^{-1}({\eta}_t - \Theta_t C_t {P}^\mu_t)$,
    we know that $(Y^\mu,Z^\mu,P^\mu)$ satisfies (\ref{eq:Hamiltonian systems}). As for $\mu$, its explicit form (\ref{eq:linear optimal control}) deduced from (\ref{eq:Hamiltonian systems}) and the argument below it demonstrate that $\mu_t$ satisfies a Gaussian distribution with the covariance matrix $\Sigma^\mu_t=\frac{\sigma^2}{2}(R_t + \frac{\sigma^2}{2}\mathbb{I}_p)^{-1}$ and the mean $v_t^\mu$ satisfying
    $
   (R_t + \frac{\sigma^2}{2}\mathbb{I}_p)v^\mu_t + B_tP^\mu_t  = 0$.
So we prove that $(Y^\mu,Z^\mu,P^\mu, \mu)$ is a solution to stochastic Hamiltonian system (\ref{eq:Hamiltonian systems}).

As for the uniqueness of optimal control, assume that stochastic Hamiltonian systems (\ref{eq:Hamiltonian systems}) has two solutions $(Y, Z, P, \mu)$ and $(Y^{\prime}, Z^{\prime}, P^{\prime}, \mu^{\prime})$. Let $\bar{\varphi} = \varphi -\varphi^\prime$, $\varphi={Y},{Z},{P},{\mu}$ and $\bar{v}= \int a\bar{\mu}(a)da$. Then $(\bar{Y},\bar{Z},\bar{P},\bar{v})$ satisfies
$$
\left\{\begin{array}{l}
d \bar{Y}_t=\left(A_t \bar{Y}_t+B_t \bar{v}_t+C_t \bar{Z}_t\right) d t+\bar{Z}_t d W_t, \\
d \bar{P}=-\left(A_t \bar{P}_t+H_t \bar{Y}_t\right) d t-\left(C_t \bar{P}_t+N_t \bar{Z}_t\right) d W_t, \\
\bar{Y}_T=0,\ \ \  \bar{P}_0=-G \bar{Y}_0, \\
(R_t + \frac{\sigma^2}{2}\mathbb{I}_p) \bar{v}_t+B_t \bar{P}_t =0.
\end{array}\right.
$$
Applying It\^{o} formula to $\bar{Y}_t \bar{P}_t$, we obtain
$$
\begin{aligned}
\mathbb{E}\left[\bar{Y}_0 G \bar{Y}_0\right]
=-\mathbb{E}\left[\int_0^T\left(\bar{Y}_t H_t\bar{Y}_t+{\bar{P}}^{}_t B_t (R_t + \frac{\sigma^2}{2}\mathbb{I}_p)^{-1} B^{}_t \bar{P}_t+\bar{Z}^{}_t N_t \bar{Z}_t\right) d t\right].
\end{aligned}
$$
From Assumption \ref{H:linear}, we know $G, H, N\in\mathbb{S}^n_+$ and $(R_t + \frac{\sigma^2}{2}\mathbb{I}_p)\in\hat{\mathbb{S}}^n_+$. Hence
$
B_t \bar{P}_t=0
$ a.s.
Consequently, $\left(\bar{Y}, \bar{Z}\right)$ satisfies
\begin{equation}\label{cz3}
\left\{\begin{array}{l}
d \bar{Y}_t=\left(A_t \bar{Y}_t+C_t \bar{Z}_t\right) d t+\bar{Z}_t d W_t, \\
\bar{Y}_T=0.
\end{array}\right.
\end{equation}
Obviously, (\ref{cz3}) has a unique solution $\left(\bar{Y}, \bar{Z}\right) \equiv 0$. So
$$
\left\{\begin{array}{l}
d \bar{P}=-A_t \bar{P}_t d t-C_t \bar{P}_t d W,\\
\bar{P}_0=-G \bar{Y}_0,
\end{array}\right.
$$
also suggests $\bar{P} \equiv 0$, and then $\bar{\mu} \equiv 0$ follows immediately from the means and covariances of the optimal controls are identical.
\end{proof}

Moreover, for a suitable reference measure, the solution to stochastic Hamiltonian system (\ref{eq:Hamiltonian systems}) is the optimal control of backward stochastic LQ control system with entropy regularization.
\begin{corollary}
    If the reference measure is a standard normal distribution, i.e. $e^{-U(a)} = \frac{e^{-\frac{|a|^2}{2}}}{\sqrt{(2\pi)^p}}$, under Assumption \ref{H:linear}, the solution to stochastic Hamiltonian system (\ref{eq:Hamiltonian systems}) is the unique optimal control of Problem (\textbf{P2}).
\end{corollary}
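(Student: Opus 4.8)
The plan is to obtain the corollary as a synthesis of three facts already at hand: the necessary condition for optimality in the LQ case (Theorem \ref{thm:linear smp}), the existence and uniqueness of a solution to the stochastic Hamiltonian system (\ref{eq:Hamiltonian systems}) (the preceding theorem), and the sufficient condition for optimality (Theorem \ref{thm:linear verification thm}). Before invoking these, I would first record that the choice $e^{-U(a)}=\frac{e^{-|a|^2/2}}{\sqrt{(2\pi)^p}}$ is admissible for the machinery of Section 4: here $U(a)=\tfrac12|a|^2+\tfrac p2\ln(2\pi)$, so $U\in C^\infty$, $\nabla_aU(a)=a$ is Lipschitz, and $\nabla_aU(a)\cdot a=|a|^2\ge C_U|a|^2+C'_U$ with $C_U=1$, $C'_U=0$. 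Thus the hypothesis on $U$ used to pass from $\mathcal P_2(\mathbb R^p)$ to $\mathcal M(\mathbb R^p)$ in (\ref{eq:optimization problem0})--(\ref{eq:optimization problem2}) holds, and all the derivations leading to (\ref{eq:optimal slq control partial info}) and (\ref{eq:linear optimal control}) are justified.

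Next I would verify the convexity hypothesis underlying Theorem \ref{thm:linear verification thm}, i.e. that the LQ data satisfy Assumption \ref{ass: convex}. The terminal cost $\phi(y)=\tfrac12\,yGy$ is convex because $G\in\mathbb S^n_+$; the generator $f(t,y,z,m)=A_ty+B_t\!\int a\,m(a)\,da+C_tz$ is affine in $(y,z,m)$, hence $-p f$ is affine and in particular convex in $(y,z,m)$; the running cost contributes $\tfrac12\,yH_ty+\tfrac12\!\int aR_ta\,m(a)\,da+\tfrac12\,zN_tz$, which is convex jointly in $(y,z,m)$ since $H_t,N_t\in\mathbb S^n_+$, $R_t\in\mathbb S^p_+$ and the $m$-term is linear; finally $\tfrac{\sigma^2}{2}Ent(m\mid e^{-U})$ is convex in $m$ by the Remark following Definition \ref{def:admissible control1}. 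Summing, $H^\sigma(t,y,z,p,m,e^{-U})$ is convex in $(y,z,m)$, so the convexity built into Assumption \ref{H:linear}(ii) is exactly what Theorem \ref{thm:linear verification thm} needs.

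With this in place the argument is short. By the preceding theorem, (\ref{eq:Hamiltonian systems}) has a unique solution $(Y^\mu,Z^\mu,P^\mu,\mu)$, and along it $\mu_t$ is the Gaussian law with covariance $\Sigma^\mu_t=\tfrac{\sigma^2}{2}(R_t+\tfrac{\sigma^2}{2}\mathbb I_p)^{-1}$ and mean $v^\mu_t$ solving $(R_t+\tfrac{\sigma^2}{2}\mathbb I_p)v^\mu_t+B_tP^\mu_t=0$; since $\Sigma^\mu_t$ is bounded and $v^\mu\in L^2_{\mathscr F}$, $\mu\in\mathcal A$ as in the discussion after (\ref{eq:linear optimal control}) (Gaussianity also gives $Ent(\mu_t\mid e^{-U})<\infty$ with $\mathbb E[\int_0^T Ent(\mu_t\mid e^{-U})\,dt]<\infty$). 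Because $(Y^\mu,Z^\mu,P^\mu)$ solves BSDE (\ref{eq:linear bsde}) and adjoint equation (\ref{eq:linear adjoint eq}), Theorem \ref{thm:linear verification thm} yields $J^\sigma(\mu)=\inf_{\pi\in\mathcal A}J^\sigma(\pi)$, so $\mu$ is optimal. For uniqueness, let $\mu'$ be any optimal control; by the necessary condition (Theorem \ref{thm:linear smp}) the quadruple $(Y^{\mu'},Z^{\mu'},P^{\mu'},\mu')$ also solves (\ref{eq:Hamiltonian systems}), and the uniqueness part of the preceding theorem forces it to equal $(Y^\mu,Z^\mu,P^\mu,\mu)$; in particular $\mu'=\mu$ a.e.\ a.s.

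The only step requiring genuine care is the convexity verification in the second paragraph, since Theorem \ref{thm:linear verification thm} is stated only under Assumption \ref{H:linear} while its proof (mirroring Theorem \ref{thm:verification theorem1}) relies on Assumption \ref{ass: convex}; one must make explicit that the terms linear in $m$ coming from $B_t\!\int a\,m(da)$ and $\tfrac12\!\int aR_ta\,m(da)$ do not destroy joint convexity and that the relative-entropy term preserves it. Everything else is a direct citation of results already proved in the excerpt.
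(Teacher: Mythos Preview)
Your proposal is correct and follows essentially the same route as the paper. The paper's own proof is terser: it states that the only thing left to check is $\mu\in\mathcal A$, and then carries out the explicit Gaussian computations for $\mathbb E[\int_0^T\!\int|a|^2\mu_t(a)\,da\,dt]$ and $\mathbb E[\int_0^T Ent(\mu_t\mid e^{-U})\,dt]$; optimality via Theorem~\ref{thm:linear verification thm} and uniqueness via Theorem~\ref{thm:linear smp} plus uniqueness of (\ref{eq:Hamiltonian systems}) are taken as understood. Your version makes those implicit steps explicit (in particular the convexity check behind Theorem~\ref{thm:linear verification thm} and the uniqueness chain), while being briefer on the admissibility computation---which is fine, since boundedness of $\Sigma^\mu_t$ and $v^\mu\in L^2_{\mathscr F}$ indeed give both required bounds for Gaussians.
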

\begin{proof}
We only need to prove that the solution $\mu$ to stochastic Hamiltonian system (\ref{eq:Hamiltonian systems}) is an admissible control of Problem (\textbf{P2}).

Recall that $\mu_t$ is Gaussian with the covariance matrix $\Sigma^\mu_t=\frac{\sigma^2}{2}(R_t + \frac{\sigma^2}{2}\mathbb{I}_p)^{-1}$ and the mean $v_t^\mu$ satisfying
    $
    (R_t + \frac{\sigma^2}{2}\mathbb{I}_p)v^\mu_t + B_tP^\mu_t = 0$. Noticing $(R + \frac{\sigma^2}{2}\mathbb{I}_p) \in L^{\infty}(0,T;\hat{\mathbb{S}}^p_+)$, $B \in L^{\infty}(0,T;\mathbb{R}^{n \times p})$ and $P^\mu \in S^2_{\mathscr{F}}(0,T;\mathbb{R}^{n})$, we first have
{\small\begin{align*}
    & \mathbb{E}\left[\int_0^T \int|a|^2 \mu_t(a) d a d t\right] = \mathbb{E}\left[\int_0^T\left(|\nu^\mu_t|^2+ tr(\Sigma^\mu_t)\right)dt\right]\leq C\left( 1 + \mathbb{E}\left[\int_0^T |P^\mu_t|^2dt\right]\right)< \infty.
\end{align*}}
On the other hand,
\begin{align*}
    \mathbb{E}\left[ \int_0^T Ent(\mu_t\mid e^{-U}) d a d t\right]&= \mathbb{E}\left[ \int_0^T \frac{1}{2}\left(-\ln(det (\Sigma^\mu_t)) + tr(\Sigma^\mu_t)+ |\nu^\mu_t|^2- p\right)dt\right]\\
    & \leq C\left( 1 + \mathbb{E}\left[\int_0^T|P^\mu_t|^2 dt\right]\right)<\infty.
\end{align*}
Therefore, $\mu \in\ \mathcal{A}$ follows.
\end{proof}

\bibliographystyle{siam}
\bibliography{references}

\end{document}